\newtheorem{thrm}{Theorem}[section]
\newtheorem{lem}[thrm]{Lemma}
\newtheorem{cor}[thrm]{Corollary}
\theoremstyle{definition}
\numberwithin{equation}{section}
\newcommand{\R}[1]{\mathbb{R}^{#1}}
\newcommand{\C}[1]{\mathbb{C}^{#1}}
\renewcommand{\S}[1]{\mathbb{S}^{#1}}
\renewcommand{\H}[1]{\mathbb{H}^{#1}}
\newcommand{\im}{\mathrm{im}}
\newcommand{\pd}[2]{\frac{\partial #1}{\partial #2}}
\newcommand{\bd}{\partial}
\newcommand{\ip}[1]{\left\langle #1\right\rangle}
\newcommand{\ud}[2]{\frac{d#1}{d#2}}
\newcommand{\set}[1]{\left\{#1\right\}}
\newcommand{\abs}[1]{\left|#1\right|}
\newcommand{\braces}[1]{\left(#1\right)}
\renewenvironment{cases}{\left\{\begin{aligned}}{\end{aligned}\right.}
\newcommand{\cross}{\times}
\newcommand{\spa}{\hspace{0.86525597943226508722cm}}
\newcommand{\mspa}{\hspace{0.45593812776599623677cm}}
\author{Dylan Cant} \address{
  Mathematics Department\\
  McGill University\\
  Montreal, Quebec} \email{dylan.cant@mcgill.ca} \thanks{This work was
  done with the support of a NSERC student research award.}
\keywords{Curve flow, Mean Curvature Type flow, Isoperimetric Inequality.}
\subjclass{Primary 53C23, Secondary 35K59}
\begin{document}

\title[Curvature Flow applied to Isoperimetric Inequality]{A Curvature Flow and Applications to an
  Isoperimetric Inequality}

\begin{abstract}
Long time existence and convergence to a circle is proved for radial graph solutions to a mean curvature type curve flow in warped product surfaces (under a weak assumption on the warp potential of the surface). This curvature flow preserves the area enclosed by the evolving curve, and this fact is used to prove a general isoperimetric inequality applicable to radial graphs in warped product surfaces under weak assumptions on the warp potential.
\end{abstract}
\maketitle

\section{Introduction} \label{sect1}
Extrinsic geometric curvature flows are concerned with evolving closed
hypersurfaces $\Sigma^{n}(t)\subset N^{n+1}$ in the normal direction
with speed equal to some function related to the curvature. A curve
flow is the one-dimensional case $n=1$. A simple example is the curve
shortening flow, which evolves closed curves $\gamma\subset \R{2}$ in
the outward normal direction $\nu$ with speed equal to (minus) the
curvature $\kappa$:
\begin{equation*}
  X_{t}=-\kappa \nu,
\end{equation*}
where $X$ is the position vector of the curve $\gamma$. In
\cite{gage1} (and \cite{gageham}), Gage (and Gage-Hamilton) prove an isoperimetric inequality related to
the curve shortening flow, an idea which was inspiration for the current work
relating isoperimetric inequalities with monotonicity properties of a
curve flow. 

In \cite{MCFSF}, Guan and Li introduce a new type of mean curvature
flow for starshaped hypersurfaces in space forms. This flow is
generalized to starshaped hypersurfaces in warped product spaces in
\cite{MCFWP}. For example, if $\Sigma^{n}$ is a hypersurface in
$\R{n+1}$, then the flow equation is
\begin{equation}\label{eq:1.1}
  \pd{X}{t}=(n-Hu)\nu,
\end{equation}
where $X$ is the position vector for $\Sigma$, $H$ is its mean
curvature, and $u=\ip{X,\nu}$ is the support function of the
hypersurface. For general warped product manifold $N^{n+1}=
(0,R)\cross\S{n}$ with metric $g^{N}:=dr^{2}+\varphi(r)^{2}g^{\S{n}}$,
the \cite{MCFWP} flow equation is
\begin{equation}\label{eq:1intro}
  \pd{X}{t}=(n\varphi'(r)-Hu)\nu, \spa u:=g^{N}(\varphi(r)\bd_{r},\nu).
\end{equation}
It is shown in \cite{MCFSF} that the volume enclosed by the hypersurface is constant along this flow.

When $n\ge 2$, Theorem 1.1 in \cite{MCFSF} guarantees long-time
existence of this flow for smooth initial hypersurfaces, and
exponential convergence to a sphere as $t\to \infty$. Furthermore,
when $n\ge 2$, Theorem 4.2 in \cite{MCFWP} (see also Proposition 3.5 in \cite{MCFSF}) guarantees that, under
the assumption $\varphi'\varphi'-\varphi\varphi''\le 1$, the
surface area of the hypersurface is monotonically decreasing along the
flow; this argument depends on $n\ge 2$ in a crucial way, since it
uses higher elementary symmetric functions of the curvatures:
essentially, the key is the Minkowski identity
\begin{equation}\label{eq:1.3}
  \int_{M}\varphi'H=\frac{2}{n-1}\int_{M}u\sigma_{2}.
\end{equation}
In dimension $n=1$, this inequality is no longer applicable.

The purpose of this current paper is to extend the long-time existence
and monotonicity of surface area results of \cite{MCFSF} to the case
$n=1$, and to prove an isoperimetric inequality for curves in
two-dimensional warped product surfaces with weak assumptions on the
warp potential $\varphi$.

The key results of this paper are:
\begin{thrm}[long-time existence and convergence to a circle]
  Let $N$ be a warped-product space with warp potential $\varphi(r)$
  satisfying $\varphi'\varphi'-\varphi\varphi''\ge 0$. If
  $\gamma_{0}\subset N$ is a smooth hypersurface, then there is a
  unique flow $\gamma(t)$ satisfying \eqref{eq:1intro} and
  $\gamma(0)=\gamma_{0}$. Furthermore $\min r|_{\gamma_{0}}\le
  r|_{\gamma(t)}\le \max r|_{\gamma(0)}$, and $r|_{\gamma(t)}\to
  \text{constant}$ as $t\to \infty$.
\end{thrm}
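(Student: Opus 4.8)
The plan is to convert the geometric flow into a single scalar quasilinear parabolic PDE for the radial height, prove short-time existence, establish a chain of a priori estimates ($C^0$, then a gradient bound, then a curvature bound, then higher-order bounds) that promote short-time existence to long-time existence, and finally extract convergence to a coordinate circle from conservation of enclosed area together with a gradient-decay estimate.

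\emph{Reduction and short-time existence.} Since $\gamma_{0}$ is a radial graph, write $\gamma(t)=\set{(r(\theta,t),\theta):\theta\in\S{1}}$. With $X(\theta,t)=(r(\theta,t),\theta)$ one computes that along such a graph the outward unit normal has radial component $\varphi(r)/\sqrt{\varphi(r)^{2}+r_{\theta}^{2}}$, the support function of \eqref{eq:1intro} is $u=\varphi(r)^{2}/\sqrt{\varphi(r)^{2}+r_{\theta}^{2}}$, and the geodesic curvature $\kappa$ is affine in $r_{\theta\theta}$ with a strictly negative coefficient. Hence \eqref{eq:1intro} becomes a scalar equation $r_{t}=\mathcal F(\theta,r,r_{\theta},r_{\theta\theta})=(\varphi'(r)-\kappa u)\sqrt{\varphi(r)^{2}+r_{\theta}^{2}}/\varphi(r)$ whose linearization has positive leading coefficient $u/(\varphi^{2}v^{2})$ (where $v=\sqrt{\varphi^{2}+r_{\theta}^{2}}/\varphi\ge 1$), so it is uniformly parabolic as long as $r$ stays in a compact subinterval of $(0,R)$ and $r_{\theta}$ stays bounded; note every coordinate circle $r\equiv c$ is a stationary solution, i.e.\ $\mathcal F(\theta,c,0,0)=0$. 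Short-time existence and uniqueness of a smooth solution then follows from the standard theory of quasilinear parabolic equations on the closed manifold $\S{1}$.

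\emph{A priori estimates, and the main obstacle.} For the $C^{0}$ bound, the constants $\min r|_{\gamma_{0}}$ and $\max r|_{\gamma_{0}}$ are a stationary sub- and supersolution, so the comparison principle gives $\min r|_{\gamma_{0}}\le r(\theta,t)\le \max r|_{\gamma_{0}}$; in particular the flow stays in the coordinate chart, and Hamilton's trick further shows $\rho_{-}(t):=\min_{\theta}r(\cdot,t)$ is non-decreasing and $\rho_{+}(t):=\max_{\theta}r(\cdot,t)$ is non-increasing. The delicate step is the gradient estimate: computing the evolution of the scale-invariant quantity $v$ (equivalently $1+r_{\theta}^{2}/\varphi(r)^{2}$) under the flow produces, after using the stationary equation, a reaction term proportional to $\varphi(r)^{-2}\braces{\varphi'\varphi'-\varphi\varphi''}$ times a nonnegative quantity, which by hypothesis has the sign required for the maximum principle to bound $\sup_{\theta}v(\cdot,t)$ by its initial value. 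This is exactly where the monotonicity $(\varphi'/\varphi)'=-(\varphi'\varphi'-\varphi\varphi'')/\varphi^{2}\le 0$ of the ``background curvature'' is used, and it is the substitute for the Minkowski identity \eqref{eq:1.3} and the higher symmetric functions that drive the analysis only when $n\ge 2$. I expect this gradient estimate (finding the right auxiliary function and verifying the sign) to be the main obstacle; everything downstream is comparatively routine.

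\emph{Higher estimates and long-time existence.} With $r$ and $r_{\theta}$ controlled, the equation is uniformly parabolic on a fixed region; a maximum-principle argument applied to the evolution of $r_{\theta\theta}$ (equivalently $\kappa$) gives a uniform $C^{2}$ bound, and then Krylov--Safonov together with Schauder estimates bootstrap to uniform bounds on $r(\cdot,t)$ in $C^{k}$ for every $k$, uniformly in $t\ge 0$. These estimates rule out finite-time singularities, so the solution exists for all $t\ge 0$ and is smooth; uniqueness is inherited from the scalar PDE.

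\emph{Convergence.} By the $C^{0}$ step, $\rho_{\pm}(t)$ converge monotonically to limits $\rho_{-}^{\infty}\le\rho_{+}^{\infty}$ lying in $[\min r|_{\gamma_{0}},\max r|_{\gamma_{0}}]$, and it remains to show $\mathrm{osc}(r(\cdot,t))\to 0$. For this I would establish decay of a weighted gradient energy such as $E(t):=\int_{\S{1}}r_{\theta}^{2}/\varphi(r)^{2}\,d\theta$: differentiating along the flow, integrating by parts, and invoking the sign condition once more yields $E'(t)\le -cE(t)$, hence $E(t)\to 0$ exponentially. Since $\mathrm{osc}(r(\cdot,t))^{2}\le 2\pi\braces{\sup\varphi^{2}}E(t)$, this forces $\rho_{+}^{\infty}=\rho_{-}^{\infty}=:r_{*}$, and interpolating the decay of $r_{\theta}$ against the uniform $C^{k}$ bounds upgrades this to $r(\cdot,t)\to r_{*}$ in $C^{\infty}$. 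Finally, conservation of the enclosed area pins down $r_{*}$ through $2\pi\int_{0}^{r_{*}}\varphi=\mathrm{Area}(\gamma_{0})$, so $\gamma(t)$ converges to the coordinate circle of that radius.
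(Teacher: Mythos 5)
Your existence argument coincides with the paper's: the same reduction to the scalar quasilinear equation \eqref{eq:3.2}, the same $C^{0}$ bound from the maximum principle at critical points of $\rho$, a maximum-principle gradient bound whose reaction term is signed by $\varphi'\varphi'-\varphi\varphi''\ge 0$ (the paper runs this on $\omega=\rho_{\theta}^{2}$ rather than on $v=\sqrt{\varphi^{2}+r_{\theta}^{2}}/\varphi$, which is a cosmetic difference), and the standard quasilinear parabolic bootstrap to long-time existence. The step where you genuinely depart from the paper is the convergence proof, and that is where there is a gap.

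The asserted inequality $E'(t)\le -cE(t)$ for $E=\int_{\S{1}}r_{\theta}^{2}/\varphi^{2}\,d\theta$ does not follow from ``integrating by parts and invoking the sign condition.'' Differentiating and integrating by parts once gives
\begin{equation*}
  E'(t)=2\int_{\S{1}}\frac{1}{(\varphi^{2}+r_{\theta}^{2})^{3/2}}\left(-r_{\theta\theta}^{2}+\frac{\varphi'r_{\theta}^{2}(\varphi^{2}-r_{\theta}^{2})}{\varphi^{3}}\,r_{\theta\theta}+\frac{(\varphi')^{2}r_{\theta}^{6}}{\varphi^{4}}\right)d\theta .
\end{equation*}
The only unconditionally good term is $-2\int r_{\theta\theta}^{2}(\cdot)$; after Cauchy--Schwarz and Wirtinger (using $\int_{\S{1}}r_{\theta}\,d\theta=0$, so $\int r_{\theta}^{2}\le\int r_{\theta\theta}^{2}$) the remaining terms are bounded by $C\sup_{\theta}r_{\theta}^{2}\cdot\int r_{\theta}^{2}$, so one only obtains $E'\le(-c+C\sup r_{\theta}^{2})E$. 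This is negative only when the gradient is already small, and the a priori estimate gives $\sup|r_{\theta}(\cdot,t)|\le\sup|r_{\theta}(\cdot,0)|$, not smallness, so the argument is circular. Further integrations by parts do generate the combination $\varphi'\varphi'-\varphi\varphi''$ but leave uncancelled positive remainders; compare the paper's identity $\ud{L}{t}=\int(\varphi'\varphi'-\varphi\varphi'')r_{s}^{2}-r_{s\theta}^{2}\,d\theta$, whose sign is controlled only under the additional hypothesis $\int r_{s}\,d\theta=0$. The paper's device is pointwise rather than integral: the evolution of $\omega=\rho_{\theta}^{2}$ has reaction terms $-C_{1}\omega^{3}-C_{2}\omega^{4}$ with $C_{1},C_{2}$ bounded below by positive constants when $\varphi'\varphi'-\varphi\varphi''>0$ on the (compact) range of $r$, and comparison with the spatially constant supersolution of $v'=-c_{1}v^{3}-c_{2}v^{4}$ yields $\sup\omega=O(t^{-1/2})$ with no smallness assumption. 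That ODE comparison is the missing idea; note also that it uses strict positivity of $\varphi'\varphi'-\varphi\varphi''$ (not merely $\ge 0$), and that the decay it gives is polynomial -- your claimed exponential rate is at best a statement about the regime where the solution is already close to a circle. If you wish to keep an energy method, you must first prove $\sup|r_{\theta}|\to 0$ by some such pointwise mechanism.
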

\begin{thrm}[Isoperimetric inequality]
  If $\gamma_{0}\subset N$ is a piecewise $C^{1}$ Lipshitz radial
  graph, and $\varphi'\varphi'-\varphi\varphi''\in [0,1]$, then
  $L[\gamma_{0}]\ge F(A[\gamma_{0}])$, where equality holds if and
  only if $\gamma_{0}$ is a ``circle,'' where a ``circle'' is either
  \begin{enumerate}[(i)]
  \item a slice $\set{r}\cross \S{1}$.
  \item a translated circle contained in a ``space-form region''
    $[r_{1},r_{2}]\cross \S{1}$ where
    $\varphi'\varphi'-\varphi\varphi''\equiv 1$.
  \end{enumerate}
\end{thrm}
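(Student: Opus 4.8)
The strategy is to use the flow of Theorem 1.1 as a tool: flow $\gamma_{0}$ through radial graphs of non‑increasing length and constant enclosed area until it becomes a ``circle,'' and then read off the inequality from the limit. First I would make the comparison function precise. A slice $\set{r_{0}}\cross\S{1}$ has length $2\pi\varphi(r_{0})$ and encloses area $a(r_{0}):=2\pi\int_{0}^{r_{0}}\varphi\,dr$; since $\varphi>0$ the map $a$ is a strictly increasing bijection onto the interval of attainable areas, so one may set $F(A):=2\pi\varphi(a^{-1}(A))$. This must be checked to be consistent with the translated circles of case (ii): on a space-form region $[r_{1},r_{2}]\cross\S{1}$ the induced metric has constant curvature, so the classical isoperimetric equality in space forms shows that \emph{every} geodesic circle enclosing area $A$ — centred at the origin or not — has length exactly $F(A)$. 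It is then routine that $F$ is continuous (indeed locally Lipschitz) and strictly increasing. Finally I would reduce to smooth data: given a piecewise‑$C^{1}$ Lipschitz radial graph $\gamma_{0}=\set{(\rho(\theta),\theta):\theta\in\S{1}}$, mollify $\rho$ to obtain smooth radial graphs $\gamma_{0}^{\varepsilon}$ with $\rho_{\varepsilon}\to\rho$ in $W^{1,1}$, the $\rho_{\varepsilon}$ confined to a fixed compact subinterval of $(0,R)$; then $L[\gamma_{0}^{\varepsilon}]\to L[\gamma_{0}]$ and $A[\gamma_{0}^{\varepsilon}]\to A[\gamma_{0}]$, so by continuity of $F$ it suffices to prove $L\ge F(A)$ for smooth radial graphs.

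For smooth $\gamma_{0}$, note that $\varphi'\varphi'-\varphi\varphi''\in[0,1]$ satisfies the hypothesis of Theorem 1.1, so the flow $\gamma(t)$ with $\gamma(0)=\gamma_{0}$ exists for all $t\ge 0$. Two monotonicity facts drive the proof. First, $A[\gamma(t)]$ is constant: letting $\Phi$ be a primitive of $\varphi$, the warped-product metric gives $\mathrm{Hess}\,\Phi=\varphi'g^{N}$, hence the Minkowski-type identity $\int_{\gamma}\varphi'\,ds=\int_{\gamma}\kappa u\,ds$, so that $\ud{}{t}A=\int_{\gamma}(\varphi'-\kappa u)\,ds=0$. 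Second, under $\varphi'\varphi'-\varphi\varphi''\le 1$ the length $L[\gamma(t)]$ is non-increasing — the $n=1$ analogue of the \cite{MCFWP} surface-area monotonicity, established earlier in the paper. By Theorem 1.1, $r|_{\gamma(t)}\to$ constant as $t\to\infty$, so $\gamma(t)$ converges to a ``circle'' $\gamma_{\infty}$ (a slice, or, when $\gamma_{0}$ lies inside a space-form region, a translated geodesic circle there, which is a stationary/travelling solution). Provided the convergence is strong enough for $L$ and $A$ to pass to the limit, $A[\gamma_{\infty}]=A[\gamma_{0}]$ and $L[\gamma_{\infty}]=\lim_{t\to\infty}L[\gamma(t)]$, whence
\begin{equation*}
  L[\gamma_{0}]\ \ge\ L[\gamma(t)]\ \ge\ L[\gamma_{\infty}]\ =\ F(A[\gamma_{\infty}])\ =\ F(A[\gamma_{0}]).
\end{equation*}

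For the equality case, suppose first $\gamma_{0}$ is smooth with $L[\gamma_{0}]=F(A[\gamma_{0}])$. Then $L[\gamma(t)]$ is constant, so $\ud{}{t}L\equiv 0$; inspecting the equality case of the length-monotonicity computation — in which $\ud{}{t}L$ is written as $-\int_{\gamma}(\text{a nonnegative integrand built from }1-\varphi'\varphi'+\varphi\varphi'')\,ds$ — pointwise vanishing of the integrand forces $\gamma_{0}$ to be a slice, or a geodesic circle sitting in a region where $\varphi'\varphi'-\varphi\varphi''\equiv 1$, i.e.\ a ``circle'' of type (i) or (ii). If $\gamma_{0}$ is merely piecewise $C^{1}$ and has a genuine corner, round the corner off and restore the enclosed area by a small outward bump elsewhere; by the triangle inequality the net change in length is strictly negative for a small enough rounding, so $L[\gamma_{0}]>F(A[\gamma_{0}])$ and equality is excluded. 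A $C^{1}$ but non-smooth competitor is handled by a quantitative version of the inequality, $L[\sigma]-F(A[\sigma])\ge c\,\mathrm{dist}(\sigma,\text{circle})$ for smooth $\sigma$ (obtainable from the exponential decay rate implicit in Theorem 1.1), applied to the mollifications.

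I expect the main obstacle to be the equality analysis: pinning down the exact pointwise condition for equality in the length-monotonicity estimate and verifying it characterises precisely slices and space-form geodesic circles. Secondary technical points are showing that the convergence in Theorem 1.1 holds in a topology (e.g.\ $C^{1}$) strong enough that $L$ is continuous along the flow, and supplying the stability estimate used to dispose of merely-$C^{1}$ competitors in the rigidity statement.
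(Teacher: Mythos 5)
Your proposal founders on the step you treat as already ``established earlier in the paper'': the monotonicity of $L[\gamma(t)]$ for a \emph{general} radial graph. The length variation along the flow is
\begin{equation*}
  \ud{L}{t}=\int_{\S{1}}(\varphi'\varphi'-\varphi\varphi'')(r_{s})^{2}-(r_{s\theta})^{2}\,d\theta,
\end{equation*}
and to conclude $\ud{L}{t}\le 0$ from $\varphi'\varphi'-\varphi\varphi''\le 1$ one must apply the Poincar\'e inequality on $\S{1}$, which requires the mean-zero condition $\int_{\S{1}}r_{s}\,d\theta=0$ (Corollary 2.4). For an arbitrary radial graph this condition fails, and the paper explicitly notes that direct monotonicity of length seems out of reach. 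The heart of the actual argument is the ``cut and reflect'' device: for each axis $\alpha$ one splits $\gamma_{0}$ into two bilaterally symmetric curves $\gamma_{\alpha,1},\gamma_{\alpha,2}$ with $L[\gamma_{\alpha,1}]+L[\gamma_{\alpha,2}]=2L[\gamma_{0}]$, chooses $\alpha$ by continuity so that both have area $A[\gamma_{0}]$, and runs the flow on the shorter symmetric half, for which $\int r_{s}\,d\theta=0$ holds for all time because the flow preserves the bilateral symmetry. Without this symmetrization your chain $L[\gamma_{0}]\ge L[\gamma(t)]\ge L[\gamma_{\infty}]$ is unjustified. (A smaller error of the same kind: even when $\gamma_{0}$ lies in a space-form region, Theorem 1.1 says the flow converges to a slice $\set{r}\cross\S{1}$, never to a translated geodesic circle; translated circles enter only through the equality analysis.)

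The equality case is also substantially harder than you indicate. Equality in $\ud{L}{t}\le 0$ is not detected by pointwise vanishing of a nonnegative integrand --- the integrand $(\varphi'\varphi'-\varphi\varphi'')(r_{s})^{2}-(r_{s\theta})^{2}$ has no sign --- but by the equality case of the Poincar\'e inequality, which yields $r_{s}=a\cos\theta+b\sin\theta$ in a region where $\varphi'\varphi'-\varphi\varphi''\equiv 1$. Showing that a radial graph satisfying this first-order condition must be a translated circle is a genuine piece of work (an ODE uniqueness argument plus explicit verification that translated circles in $\R{2}$, $\S{2}$, $\H{2}$ realize every such $r_{s}$; this occupies Theorems 5.4--5.9), and it must be combined with an existence theorem for the flow with merely Lipschitz initial data so that the constant-length conclusion can be propagated back to $t=0$. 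Your proposed substitutes --- rounding corners while restoring area, and a quantitative stability estimate $L-F(A)\ge c\,\mathrm{dist}(\sigma,\text{circle})$ --- are not established anywhere in the paper and would each require a proof comparable in difficulty to the argument they are meant to replace.
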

\subsection{Motivating Example}
In this subsection, we show that, in $\R{2}$, the monotonicity of
length (e.g. the ``surface area'' of the hypersurface) of the
one-dimensional curve flow \eqref{eq:1.1} (with $n=1$) is intimately
connected to the classical isoperimetric ratio in $\R{2}$.

Let $\gamma(t)$ be a curve flow which solves \eqref{eq:1.1} with
initial data $\gamma_{0}$. Let $L(t)$, $A(t)$, $ds$, $u$,
$\kappa$ be the length, area (the ``volume''), arc-length one-form,
support function, and curvature of $\gamma(t)$, respectively. As we
will prove in more generality below, we have the following equations
\begin{equation}
  \label{eq:1.4}
  \ud{A(t)}{t}=\int_{\gamma(t)}1-u\kappa\,ds=0\mspa\text{and}\mspa \ud{L(t)}{t}=\int_{\gamma(t)}\kappa-u\kappa^{2}\,ds=2\pi-\int_{\gamma(t)}u\kappa^{2},
\end{equation}
where we have used Gauss-Bonnet in the second equality. 
Define the \emph{isoperimetric difference} $\Lambda:=L^{2}-4\pi
A$. The classical isoperimetric inequality states that $\Lambda\ge 0$,
with equality holding only for circles. We estimate
\begin{equation*}
  L^{2}=\left[\int u^{1/2}u^{1/2}\kappa\,ds\right]^{2}\le\int u\,ds\int
  u\kappa^{2}\le 2A\int u\kappa^{2},
\end{equation*}
where we have used the well-known fact $\int u\,ds=2A$, and the fact
that $\int 1-u\kappa\,ds =0$. From this we can estimate the second
side of \eqref{eq:1.4} to obtain
\begin{equation*}
  2L\ud{L(t)}{t}\le \frac{L}{2A}(4\pi A-L^{2}),
\end{equation*}
which yields 
\begin{equation}\label{eq:1.5}
  \ud{\Lambda}{t}\le -\frac{L}{2A}\Lambda.
\end{equation}
Thus, \emph{if $\Lambda>0$} for our initial curve, then $\Lambda$
decreases as the curve $\gamma(t)$ approaches a circle. By proving
that $\Lambda$ is strictly non-negative in a neighborhood of any
circle, one can use \eqref{eq:1.5} to prove that $\Lambda\not< 0$ for
any initial data $\gamma$. 

Unfortunately, the argument leading to \eqref{eq:1.5} seems to be
special to $\R{2}$, and the author was unable to reproduce it for more
general warped-product spaces. However, a different argument is
possible, and one of the main results in this paper is a general
isoperimetric inequality applicable to a large class of warped product
spaces (subject to a few conditions on the warp potential $\varphi$).
\section{Geometric Preliminaries}
In the remainder of this paper, let $N=(0,R)\cross\S{1}$ be a warped-product surface with metric $g^{N}:=dr^{2}+\varphi(r)^{2}d\theta^{2}$, with $\varphi(r)>0$. Let $\nabla(\cdot,\cdot\cdot)$ be the Levi-Civita connection on $N$.

 Define $\Phi:N\to (0,\infty)$ by 
\begin{equation}
  \label{eq:2.1}
  \Phi(r)=\int_{0}^{r}\varphi(r')\,dr'.
\end{equation}

A closed loop $\gamma\subset N$ is called a $C^{k}$ radial graph if there is a $C^{k}$ mapping $\rho:\S{1}\to (0,R)$ such that $\gamma=\im(\theta\mapsto(\rho(\theta),\theta))$. We denote by $\nu$ and $\bd_{s}$ the outward normal and unit tangent vector of $\gamma$; we assume $\gamma$ has the orientation such that $g(\bd_{r},\bd_{\nu})>0$ and $g(\bd_{\theta},\bd_{s})>0$. The curvature $\kappa$ satisfies $\nabla(\bd_{s},\bd_{\nu})=\kappa\,\bd_{s}$. 
\subsection{Isoperimetric Inequality.}
Let $C_{r}$ denote the circle $\set{r}\cross\S{1}\subset N$, and let $L(r)$ and $A(r)$ denote its length and area, respectively. Then, $A(r)=2\pi \Phi(r)$, and since $\Phi'(r)>0$, we may solve for $r$ as a function of $A$, and consequently, there is some differentiable function $F$ such that
\begin{equation*}
  L(r)^{2}=F(A(r)).
\end{equation*}
The isoperimetric inequality is the statement that, for any curve $\gamma\subset N$,
\begin{equation*}
  L[\gamma]^{2}\ge F(A[\gamma]).
\end{equation*}
Unfortunately, this inequality is not true without some restriction on the warp potential $\varphi$. If $\varphi'\varphi'-\varphi\varphi''>1$, then the inequality will fail. To prove this assertion, we compute the length and area of the radial graph $\gamma(\epsilon)$ with $\rho(\theta)=r_{0}+\epsilon g(\theta)$ to second order in $\epsilon$, and show that
\begin{equation*}
  \begin{aligned} \frac{L[\gamma(\epsilon)]^{2}-F(A[\gamma(\epsilon)])}{\epsilon^{2}}=&4\pi^{2}\left(\left[\frac{1}{2\pi}\int g\,d\theta\right]^{2}+\frac{1}{2\pi}\int (g_{\theta})^{2}-g^{2}\,d\theta\right)\\ &+(\beta-1)\left(\left[\frac{1}{2\pi}\int g\,d\theta\right]^{2}-\frac{1}{2\pi}\int g^{2}\,d\theta\right)
  \end{aligned},
  \end{equation*}
  where $\beta=\varphi'\varphi'-\varphi\varphi''$. If $\beta>1$ and we
  consider $g=\cos\theta$, for example, we obtain $L^{2}-F(A)<0$, to
  lowest order in $\epsilon$. We also note that $\varphi'\varphi'-\varphi\varphi''=1$ for $\R{2},\S{2}$ and $\H{2}$.
\subsection{Geometric properties of Guan-Li mean-curvature flow.}
Following \cite{MCFSF}, we prove the following lemma:
\begin{lem}\label{lem:2.1}
  The vector field $V=\varphi\bd_{r}$ is a conformal Killing vector field with conformal factor $\varphi'$. More precisely, for all vector fields $X,Y$ on $N$, we have
  \begin{equation*}
    \nabla^{2}\Phi(X,Y)=g(\nabla(Y,V),X)=\varphi'g(X,Y).
  \end{equation*}
\end{lem}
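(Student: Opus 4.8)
The plan is to verify the two claimed equalities by a direct computation in the coordinate frame $\{\partial_r,\partial_\theta\}$, exploiting the warped-product structure of the metric $g^N = dr^2 + \varphi(r)^2\,d\theta^2$. First I would record the Christoffel symbols of $g^N$: the only nonvanishing ones are $\Gamma^r_{\theta\theta} = -\varphi\varphi'$ and $\Gamma^\theta_{r\theta} = \Gamma^\theta_{\theta r} = \varphi'/\varphi$. From these, $\nabla(\partial_r,\partial_r)=0$, $\nabla(\partial_\theta,\partial_r)=(\varphi'/\varphi)\partial_\theta$, and $\nabla(\partial_\theta,\partial_\theta)=-\varphi\varphi'\partial_r$. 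It then suffices to check the identity on the pairs $(X,Y)\in\{(\partial_r,\partial_r),(\partial_r,\partial_\theta),(\partial_\theta,\partial_\theta)\}$ by bilinearity and symmetry.

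Next I would compute $\nabla(Y,V)$ for $V=\varphi\partial_r$. Since $\varphi$ depends only on $r$, we get $\nabla(\partial_r,V) = \partial_r(\varphi)\,\partial_r + \varphi\,\nabla(\partial_r,\partial_r) = \varphi'\partial_r$ and $\nabla(\partial_\theta,V) = \partial_\theta(\varphi)\,\partial_r + \varphi\,\nabla(\partial_\theta,\partial_r) = \varphi\cdot(\varphi'/\varphi)\partial_\theta = \varphi'\partial_\theta$. In both cases $\nabla(Y,V)=\varphi' Y$, so pairing against any $X$ gives $g(\nabla(Y,V),X)=\varphi' g(X,Y)$, which is the second equality. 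For the first equality, I would use the identification of the Hessian: $\nabla^2\Phi(X,Y) = X(Y\Phi) - (\nabla(X,Y))\Phi$. Since $\Phi$ depends only on $r$ with $\Phi'=\varphi$, we have $\mathrm{grad}\,\Phi = \Phi'\partial_r = \varphi\partial_r = V$ (using $g^N(\partial_r,\partial_r)=1$), and hence $\nabla^2\Phi(X,Y) = g(\nabla(X,\mathrm{grad}\,\Phi),Y) = g(\nabla(X,V),Y)$; by symmetry of the Hessian this also equals $g(\nabla(Y,V),X)$, closing the chain.

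There is no real obstacle here — the lemma is essentially a bookkeeping exercise — but the one point deserving care is the symmetry claim $g(\nabla(X,V),Y)=g(\nabla(Y,V),X)$, which is exactly the conformal Killing condition and is equivalent to the symmetry of $\nabla^2\Phi$; I would either invoke that $\nabla^2\Phi$ is automatically symmetric for the Levi-Civita connection (since it is torsion-free), or simply observe from the explicit computation above that $\nabla(Y,V)=\varphi' Y$ makes both sides equal to $\varphi' g(X,Y)$ manifestly. A secondary point is to state clearly the sign and orientation conventions for $\nabla(\cdot,\cdot\cdot)$ so that the Christoffel computation matches the connection notation used in the paper; once that is fixed, the three coordinate cases each take one line.
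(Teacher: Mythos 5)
Your proposal is correct and follows essentially the same route as the paper: both identify $V=\varphi\bd_{r}$ with $\mathrm{grad}\,\Phi$ to get $\nabla^{2}\Phi(X,Y)=g(\nabla(Y,V),X)$, and both reduce the remaining identity to a Christoffel-symbol computation in the $\set{\bd_{r},\bd_{\theta}}$ frame (the paper evaluates the Hessian on the diagonal basis pairs, while you equivalently verify $\nabla(Y,V)=\varphi'Y$ directly). No gaps.
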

\begin{proof}
  Simply compute:
  \begin{equation*} 
    \begin{aligned}
\nabla^{2}\Phi(X,Y)&=Y\ip{d\Phi,X}-\ip{d\Phi,\nabla(Y,X)}=\\ &g(\nabla(Y,V),X)+g(V,\nabla(Y,X))-g(V,\nabla(Y,X)).      
    \end{aligned}
  \end{equation*}
  Then
  \begin{equation*}
    \nabla^{2}\Phi(\bd_{r},\bd_{r})=\varphi'\spa \nabla^{2}\Phi(\bd_{\theta},\bd_{\theta})=\varphi^{2}\Gamma_{\theta r}^{\theta}=\varphi\varphi',
  \end{equation*}
  as desired.
\end{proof}
\begin{cor}
  On a curve $\gamma$, $\Phi_{ss}=\varphi'-u\kappa$. 
\end{cor}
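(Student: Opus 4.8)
The plan is to differentiate the function $\Phi$ along $\gamma$ twice and feed the result into Lemma~\ref{lem:2.1}. The starting observation is that, since $\Phi$ depends only on $r$ with $\Phi'(r)=\varphi(r)$, its differential is $\langle d\Phi,X\rangle=\varphi\,g(\bd_{r},X)=g(V,X)$ for every vector field $X$, where $V=\varphi\bd_{r}$ as in the lemma. Restricting to $\gamma$ and evaluating on the unit tangent gives the first derivative $\Phi_{s}=\tfrac{d}{ds}(\Phi\circ\gamma)=g(V,\bd_{s})$.

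Next I would differentiate once more along $\gamma$, using metric compatibility of the Levi-Civita connection: $\Phi_{ss}=\tfrac{d}{ds}g(V,\bd_{s})=g(\nabla(\bd_{s},V),\bd_{s})+g(V,\nabla(\bd_{s},\bd_{s}))$. The first term is handled by Lemma~\ref{lem:2.1} with $X=Y=\bd_{s}$: it equals $\varphi'g(\bd_{s},\bd_{s})=\varphi'$ because $\bd_{s}$ is a unit vector. For the second term I need to identify the curvature vector $\nabla(\bd_{s},\bd_{s})$: differentiating $g(\bd_{s},\bd_{s})\equiv 1$ shows it is orthogonal to $\bd_{s}$, hence a multiple of $\bd_{\nu}$; differentiating $g(\bd_{s},\bd_{\nu})\equiv 0$ and invoking the defining relation $\nabla(\bd_{s},\bd_{\nu})=\kappa\,\bd_{s}$ gives $g(\nabla(\bd_{s},\bd_{s}),\bd_{\nu})=-g(\bd_{s},\nabla(\bd_{s},\bd_{\nu}))=-\kappa$, so $\nabla(\bd_{s},\bd_{s})=-\kappa\,\bd_{\nu}$. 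Therefore $g(V,\nabla(\bd_{s},\bd_{s}))=-\kappa\,g(V,\bd_{\nu})=-\kappa u$ by the definition of the support function $u$, and combining the two pieces yields $\Phi_{ss}=\varphi'-u\kappa$.

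The argument is essentially immediate once Lemma~\ref{lem:2.1} is available, so there is no substantive obstacle; the only place demanding care is the sign bookkeeping in $\nabla(\bd_{s},\bd_{s})=-\kappa\,\bd_{\nu}$, i.e.\ checking that the curvature convention $\nabla(\bd_{s},\bd_{\nu})=\kappa\,\bd_{s}$ together with the prescribed orientation of the frame $(\bd_{s},\bd_{\nu})$ is consistent with the claimed formula.
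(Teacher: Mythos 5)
Your proof is correct and follows essentially the same route as the paper: differentiate $\Phi_{s}=g(V,\bd_{s})$ once more, apply Lemma~\ref{lem:2.1} to the first term, and use $\nabla(\bd_{s},\bd_{s})=-\kappa\nu$ for the second. The only difference is that you derive this last identity from the stated curvature convention, whereas the paper simply cites it as well known.
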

\begin{proof}
  We have $\Phi_{s}=g(V,\bd_{s})$, and taking a second derivative yields
  \begin{equation*}    \Phi_{ss}=g(\nabla(\bd_{s},V),\bd_{s})+g(V,\nabla(\bd_{s},\bd_{s}))=\varphi'-\kappa g(V,\nu)=\varphi'-u\kappa,
  \end{equation*}
  as desired. We used the well-known fact that $\nabla(\bd_{s},\bd_{s})=-\kappa\nu$.
\end{proof}

\begin{thrm}\label{thm:2.2}
Let $\gamma_{0}$, $\gamma(t)$ are closed loops in $N$, and suppose now that $X(\cdot,t):\gamma_{0}\to \gamma(t)$ parametrizes $\gamma(t)$ in terms of $\gamma_{0}$. Suppose that $X$ evolves according to $\bd_{t}X=f\nu$. Then 
\begin{equation*}
  \ud{L}{t}(t)=\int_{\gamma(t)}f\kappa\,ds\mspa\text{and}\mspa \ud{A}{t}(t)=\int_{\gamma(t)}f\,ds,
\end{equation*}
where $L$ and $A$ are the length and area of $\gamma(t)$, respectively.  
\end{thrm}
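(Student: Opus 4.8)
The plan is to compute the two derivatives using the first variation formulas for length and area under a normal flow, treating the length and area as integrals over the fixed domain $\gamma_0$ and differentiating under the integral sign. Write $X(p,t)$ for the parametrization, so that $\bd_t X = f\nu$ by hypothesis. Denote by $v = \abs{\bd_p X}$ the speed of the parametrization (with respect to some fixed parameter $p$ on $\gamma_0$), so that $ds = v\,dp$ and $\bd_s = v^{-1}\bd_p$. The key infinitesimal fact is the evolution of the line element: $\bd_t v = -f\kappa\, v$, which follows by computing $\bd_t \abs{\bd_p X}^2 = 2g(\nabla(\bd_t X, \bd_p X) ,\ldots)$ — more precisely, $\bd_t(v^2) = 2 g(\nabla_{\bd_p}(f\nu), \bd_p X)$, and expanding $\nabla_{\bd_p}(f\nu) = (\bd_p f)\nu + f \nabla_{\bd_p}\nu$; the normal component $(\bd_p f)\nu$ is orthogonal to $\bd_p X$, and $g(\nabla_{\bd_p}\nu, \bd_p X) = v\, g(\nabla_{\bd_s}\nu,\bd_p X) = v \cdot v\, g(\nabla(\bd_s,\nu),\bd_s)\cdot\langle\ldots\rangle$; using the sign convention $\nabla(\bd_s,\bd_\nu) = \kappa\,\bd_s$ from the preliminaries and the Weingarten-type relation $g(\nabla(\bd_s,\nu),\bd_s) = -g(\nu,\nabla(\bd_s,\bd_s)) = \kappa\, g(\nu,\nu)$ wait — one must be careful: $\nabla(\bd_s,\nu) = \kappa\,\bd_s$ directly gives $g(\nabla_{\bd_s}\nu,\bd_s) = \kappa$, hence $\bd_t(v^2) = 2 f v^2 \kappa$ or $-2fv^2\kappa$ depending on orientation of $\nu$ relative to the curve — and I would fix the sign so that for the shrinking case it is consistent with \eqref{eq:1.4}. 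This gives $\bd_t v = f\kappa\, v$ (taking the sign that makes $dL/dt = \int f\kappa\, ds$ come out, consistent with the motivating example where $f = 1 - u\kappa$).

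For the length: $L(t) = \int_{\gamma_0} v\, dp$, so $\dfrac{dL}{dt} = \int_{\gamma_0} \bd_t v\, dp = \int_{\gamma_0} f\kappa\, v\, dp = \int_{\gamma(t)} f\kappa\, ds$, which is the first claimed identity.

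For the area, I would use the divergence-theorem description of the enclosed area. Since $A(r) = 2\pi\Phi(r)$ for slices, the natural primitive is $\Phi$: the enclosed area of $\gamma$ equals $\int_{D} (\text{volume form})$ where $D$ is the region bounded by $\gamma$, and by Lemma \ref{lem:2.1} the vector field $V = \varphi\,\bd_r$ has divergence $\mathrm{div}\, V = 2\varphi'$ (trace of $X \mapsto \nabla(X,V) = \varphi' X$ in dimension $2$) — hmm, more directly, one checks $\mathrm{div}\, V = 2\varphi'$, but a cleaner route is: the area form is $\varphi\, dr\wedge d\theta = d(\Phi\, d\theta)$, so $A[\gamma] = \oint_\gamma \Phi\, d\theta$ by Stokes. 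Alternatively, and this is the route I would actually take, note that for a normal variation the rate of change of enclosed area is $\dfrac{dA}{dt} = \int_{\gamma(t)} g(\bd_t X, \nu)\, ds = \int_{\gamma(t)} f\, ds$, which is the standard first-variation-of-volume formula: the region sweeps out, at each point of the boundary, a normal displacement $f\nu$, contributing $f\, ds$ to the area rate. I would justify this carefully by writing $A(t) = \oint_{\gamma(t)} \Phi\, d\theta$ (using $d(\Phi\, d\theta) = \varphi\, dr\wedge d\theta = dA_N$), parametrizing by the fixed $p$, and differentiating: $\dfrac{dA}{dt} = \oint_{\gamma_0} \bigl( \bd_t(\Phi\circ X)\, \bd_p\theta + \Phi\, \bd_p(\bd_t\theta)\bigr)\, dp$; integrating the second term by parts in $p$ cancels part of the first, leaving $\oint (\bd_t\Phi)\,\bd_p\theta - (\bd_p\Phi)(\bd_t\theta)\, dp = \oint d\Phi \wedge d\theta (\bd_t X, \bd_p X)\, dp = \oint dA_N(\bd_t X, \bd_p X)\, dp$. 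Finally $dA_N(f\nu, \bd_p X) = f\, dA_N(\nu, v\,\bd_s) = f v\, dA_N(\nu,\bd_s) = f v$, since $(\nu, \bd_s)$ is a positively oriented orthonormal frame, giving $\dfrac{dA}{dt} = \int f v\, dp = \int_{\gamma(t)} f\, ds$.

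The main obstacle is bookkeeping with signs and orientations: getting $\bd_t v = f\kappa\, v$ with the correct sign from the stated curvature convention $\nabla(\bd_s,\bd_\nu) = \kappa\,\bd_s$, and ensuring the Stokes/integration-by-parts computation for the area uses the orientation $g(\bd_\theta,\bd_s) > 0$ consistently so that no spurious sign appears. There is no serious analytic difficulty — all curves are smooth closed loops and everything is a finite-dimensional first-variation computation — so once the conventions are pinned down the two formulas drop out. I would also remark that these are exactly the formulas quoted in \eqref{eq:1.4} of the motivating example with $f = 1 - u\kappa$ and $f\kappa = \kappa - u\kappa^2$, and that combined with the Corollary ($\Phi_{ss} = \varphi' - u\kappa$) the area identity $dA/dt = \int (\varphi' - u\kappa)\, ds = \int \Phi_{ss}\, ds = 0$ shows the enclosed area is preserved along \eqref{eq:1intro}.
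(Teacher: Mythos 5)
Your argument is correct, but it takes a different route from the paper's. The paper works in Fermi (normal tube) coordinates around $\gamma(t_{0})$: it writes the metric as $A(s,z)ds^{2}+dz^{2}$, computes $A_{z}(s,0)=\kappa$ from the convention $\nabla(\bd_{s},\nu)=\kappa\,\bd_{s}$, represents $\gamma(t)$ as a graph $z=F(s,t)$ over $\gamma(t_{0})$, and differentiates the resulting one-dimensional integrals; the area formula then falls out immediately from $A(t)=\int\int_{0}^{F}A(s,z')\,dz'\,ds$. You instead differentiate the pullback line element directly, obtaining $\bd_{t}v=f\kappa v$ from $\bd_{t}|\bd_{p}X|^{2}=2g(\nabla_{\bd_{p}}(f\nu),\bd_{p}X)$, and handle the area via the primitive $\Phi\,d\theta$ of the area form and an integration by parts around the closed loop. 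Both are first-variation computations of comparable length; the paper's version localizes all the geometry into the single identity $A_{z}=\kappa$ and makes the area derivative trivial (at the cost of invoking the tubular neighborhood), while yours avoids the exponential map entirely and exploits the radial-graph structure of $N$ through $d(\Phi\,d\theta)=\varphi\,dr\wedge d\theta$. One small point: your hedging about the sign of $\bd_{t}v$ is unnecessary — the paper's stated convention $\nabla(\bd_{s},\bd_{\nu})=\kappa\,\bd_{s}$ gives $g(\nabla_{\bd_{s}}\nu,\bd_{s})=\kappa$ outright, so $\bd_{t}v=f\kappa v$ with no ambiguity; and your orientation check that $(\nu,\bd_{s})$ is positively oriented for $dA_{N}=\varphi\,dr\wedge d\theta$ does follow from the stated conventions $g(\bd_{r},\nu)>0$, $g(\bd_{\theta},\bd_{s})>0$ via the decomposition $\nu=a\bd_{r}-b\bd_{\theta}/\varphi$, $\bd_{s}=b\bd_{r}+a\bd_{\theta}/\varphi$ used later in Section 2, so that step is sound.
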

\begin{proof}
  Consider the normal tube $U_{\epsilon}\simeq \gamma(t_{0})\cross (-\epsilon,\epsilon)$ around $\gamma(t_{0})$ obtained by sending $(p,z)$ to $\mathrm{Exp}_{p}(z\nu(p))$. For $\epsilon$ small enough, this is a diffeomorphism from $\gamma(t_{0})\cross (-\epsilon,\epsilon)$ onto an open set $U_{\epsilon}\subset N$. In the natural coordinates of $U_{\epsilon}$, we can write $g=A(s,z)ds^{2}+dz^{2}$, where $A(s,z)=g(\bd_{s},\bd_{s})$, and so on $\set{z=0}$ we have
  \begin{equation*}
A_{z}(s,0)=g(\nabla(\nu,\bd_{s}),\bd_{s})=g(\nabla(\bd_{s},\nu),\bd_{s})=\kappa.
  \end{equation*}
  Consider $\gamma(t)$ as the graph of $F(\cdot,t):\gamma(t_{0})\to (-\epsilon,\epsilon)$. Clearly $F_{t}(s,t_{0})=f(s)$. Then
  \begin{equation*}
    L(t)=\int_{\gamma(t_{0})}\sqrt{A(s,F(s,t))^{2}+F_{s}^{2}}\,ds,
  \end{equation*}
  and so
  \begin{equation*}
    L_{t}(t_{0})=\int_{\gamma(t_{0})}\kappa(s)f(s)\,ds.
  \end{equation*}
  Similarly,
  \begin{equation*}
    A(t)=\int_{\gamma(t_{0})}\int_{z=0}^{F(s,t)}A(s,z')dz'ds\implies A_{t}(t_{0})=\int_{\gamma(t_{0})}f(s)\,ds.
  \end{equation*}
  This completes the proof.
\end{proof}

\begin{thrm}
  Suppose that $\gamma(t)$ evolves with speed function $f=\Phi_{ss}$. Then
  \begin{equation*}    \ud{A}{t}(t)=0\mspa\text{and}\mspa\ud{L}{t}(t)=\int_{\S{1}}(\varphi'\varphi'-\varphi\varphi'')(r_{s})^{2}-(r_{s\theta})^{2}\,d\theta.
  \end{equation*}
\end{thrm}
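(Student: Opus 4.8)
The statement $\ud{A}{t}=0$ is immediate: applying Theorem~\ref{thm:2.2} with $f=\Phi_{ss}$ gives $\ud{A}{t}=\int_{\gamma(t)}\Phi_{ss}\,ds$, and since $\Phi_{ss}$ is the arc-length derivative of the function $\Phi_s$, which is globally defined on the closed loop $\gamma(t)$, this integral vanishes. The content of the theorem is therefore the length formula, and by Theorem~\ref{thm:2.2} (again with $f=\Phi_{ss}$) this amounts to evaluating $\ud{L}{t}=\int_{\gamma(t)}\kappa\,\Phi_{ss}\,ds$ explicitly for the radial graph $\gamma(t)$.

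The plan is to introduce the \emph{inclination angle} $\phi$ along $\gamma$, characterized by $\sin\phi=r_s:=g(\bd_r,\bd_s)$ and $\cos\phi=g(\varphi^{-1}\bd_\theta,\bd_s)=\varphi/\sqrt{r_\theta^2+\varphi^2}$; since a radial graph is everywhere transverse to the slices $C_r$, one has $\cos\phi>0$, so $\phi$ is a genuine single-valued function $\S{1}\to(-\pi/2,\pi/2)$. Lemma~\ref{lem:2.1} and its corollary translate cleanly into this language. First, $\Phi_s=g(V,\bd_s)=\varphi\sin\phi$, hence, differentiating and using $r_s=\sin\phi$,
\begin{equation*}
  \Phi_{ss}=\varphi'\sin^2\phi+\varphi\,\phi_s\cos\phi .
\end{equation*}
Second, $u=g(V,\nu)=\varphi\cos\phi$, so the corollary $\Phi_{ss}=\varphi'-u\kappa$ gives the Liouville-type identity $\kappa=\varphi'\varphi^{-1}\cos\phi-\phi_s$ (which one can alternatively derive by expanding $\nabla(\bd_s,\bd_s)$ in the coordinate frame and reading off the coefficient of $-\nu$).

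Substituting these expressions into $\int_\gamma\kappa\,\Phi_{ss}\,ds$ and expanding the product, the two terms linear in $\phi_s$ collect into $\varphi'\cos(2\phi)\,\phi_s$, and the one genuine observation in the computation is that
\begin{equation*}
  \varphi'\cos(2\phi)\,\phi_s=\ud{}{s}\!\left(\tfrac{1}{2}\varphi'\sin(2\phi)\right)-\varphi''\sin^2\phi\cos\phi ,
\end{equation*}
so that integrating over the closed loop kills the exact differential (this is where single-valuedness of $\phi$ is used). What remains, after writing $\varphi'\varphi'\varphi^{-1}-\varphi''=(\varphi'\varphi'-\varphi\varphi'')/\varphi$, is
\begin{equation*}
  \ud{L}{t}=\int_\gamma\left[\frac{\varphi'\varphi'-\varphi\varphi''}{\varphi}\,\sin^2\phi\cos\phi-\varphi\,\phi_s^2\cos\phi\right]ds .
\end{equation*}
Finally I would pass back to an integral over $\S{1}$ using $\cos\phi\,ds=\varphi\,d\theta$, together with $\sin\phi=r_s$ and $\cos\phi\,\phi_\theta=\bd_\theta(\sin\phi)=\bd_\theta r_s=:r_{s\theta}$: the first bracketed term becomes $(\varphi'\varphi'-\varphi\varphi'')(r_s)^2\,d\theta$ and the second becomes $(r_{s\theta})^2\,d\theta$, which is exactly the asserted formula.

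I expect the only non-mechanical step to be recognizing the $\phi_s$-linear cross term as a perfect arc-length derivative modulo the harmless $\varphi''$ correction — this cancellation is precisely why the final answer contains no reference to $\phi$ — together with the observation that $\phi$ is globally defined on a radial graph so that no boundary term survives. A more computational route, substituting the explicit rational expressions for $\kappa$, $u$, $\Phi_{ss}$, and $ds$ in terms of $r_\theta$, $r_{\theta\theta}$, and $\sqrt{r_\theta^2+\varphi^2}$ and then simplifying, reaches the same identity but obscures this structure.
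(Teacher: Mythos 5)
Your proof is correct and is essentially the paper's own argument: the paper also reduces to $\ud{L}{t}=\int\kappa\,\Phi_{ss}\,ds$ via Theorem \ref{thm:2.2}, decomposes $\bd_r=a\nu+b\bd_s$ with $a^2+b^2=1$ (your $a=\cos\phi$, $b=\sin\phi$ is just a reparametrization), and kills the same $\phi_s$-linear cross term $\varphi'ab_s-\varphi'b^2b_s/a$ by an integration by parts combined with $aa_s=-bb_s$ — which is exactly your identification of $\varphi'\cos(2\phi)\phi_s$ as an exact derivative modulo the $\varphi''$ correction — before converting to a $\theta$-integral via $\tfrac{a}{\varphi}ds=d\theta$. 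No gaps; the two write-ups differ only in notation.
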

\begin{proof}
  The first part of the theorem follows from Theorem \ref{thm:2.2} and fact that $\int\Phi_{ss}\,ds=0$. For the second part of the theorem, we introduce scalar functions $a,b$ defined by
  \begin{equation*}
  \bd_{r}=a\nu+b\bd_{s},\spa\text{(note $r_{s}=b$)}
\end{equation*}
then it is easy to show that
\begin{equation*}
  \bd_{s}=a\frac{\bd_{\theta}}{\varphi}+b\bd_{r}\spa \nu=a\bd_{r}-b\frac{\bd_{\theta}}{\varphi},
\end{equation*}
and using this, we may express the curvature $\kappa$ in terms of $a$ and $b$:
\begin{equation*}
  \kappa=\frac{\varphi'a}{\varphi}-\frac{b_{s}}{a}.
\end{equation*}
It is clear that 
\begin{equation*}
  \Phi_{s}=\varphi b\spa \Phi_{ss}=\varphi' b^{2}+\varphi b_{s},
\end{equation*}
and thus
\begin{equation*}
  \int_{\gamma}\kappa\Phi_{ss}\,ds=\int \frac{a\varphi'}{\varphi}b^{2}-\frac{\varphi}{a}b_{s}^{2}-\frac{\varphi'b^{2}b_{s}}{a}+\varphi'ab_{s}\,ds.
\end{equation*}
Integration by parts on the last term yields
\begin{equation*}
  \int_{\gamma}\kappa\Phi_{ss}\,ds=\int \frac{a\dot{\varphi}^{2}}{\varphi}b^{2}-\frac{\varphi}{a}b_{s}^{2}-\frac{\dot{\varphi}b^{2}b_{s}}{a}-\dot{\varphi}a_{s}b-\frac{a\ddot{\varphi}\varphi}{\varphi}b^{2}\,ds,
\end{equation*}
and thus
\begin{equation*}
  \int_{\gamma}\kappa\Phi_{ss}\,ds=\int \frac{a(\varphi'\varphi'-\varphi\varphi'')}{\varphi}b^{2}-\frac{\varphi}{a}b_{s}^{2}-\frac{\varphi'b^{2}b_{s}}{a}-\varphi'a_{s}b\,ds.
\end{equation*}
now we use the fact that $a^{2}+b^{2}=1$ to deduce that $aa_{s}=-bb_{s}$, which makes the last two terms in the integral cancel, and we are left with
\begin{equation*}
  \int_{\gamma}\kappa\Phi_{ss}\,ds=\int \frac{a(\varphi'\varphi'-\varphi\varphi'')}{\varphi}b^{2}-\frac{\varphi}{a}b_{s}^{2}\,ds
\end{equation*}
Consider the radial graph parametrization of $\gamma$, obtained by
$\theta\mapsto (\theta,\rho(\theta))$. It is straightforward to show that
\begin{equation*}
  \frac{a}{\varphi}ds=d\theta \spa b_{s}=\frac{a}{\varphi}b_{\theta}, 
\end{equation*}
whereby we obtain
\begin{equation}\label{eq:2.2} \ud{L(t)}{t}=\int_{\gamma}\kappa\Phi_{ss}\,ds= \int_{\S{1}}(\varphi'\varphi'-\varphi\varphi'')(r_{s})^{2}(\theta)-(r_{s\theta})^{2}(\theta)\,d\theta,
\end{equation}
where we have replaced $b=r_{s}$. This completes the proof.
\end{proof}
\begin{cor}\label{cor:2.4}
  Suppose that $\varphi'\varphi'-\varphi\varphi''\le 1$. If $\gamma(t)$ satisfies $\int_{\S{1}}r_{s}\,d\theta=0$, then
  $dL(t)/dt\le 0$, with equality if and only if $r_{s}\equiv 0$, or
  $\varphi'\varphi'-\varphi\varphi''\equiv 1$ and $r_{s}=a\cos\theta+b\sin\theta$ ($a,b$ may depend on time).
\end{cor}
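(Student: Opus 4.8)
The plan is to argue directly from the formula \eqref{eq:2.2}. Write $\beta(\theta):=(\varphi'\varphi'-\varphi\varphi'')|_{r=\rho(\theta)}$ and $w:=r_{s}$, regarded as a function on $\S{1}$ ($w$ being $C^{1}$, as \eqref{eq:2.2} already presumes). Then \eqref{eq:2.2} reads $\ud{L}{t}(t)=\int_{\S{1}}\beta w^{2}-w_{\theta}^{2}\,d\theta$, the standing hypothesis is $\int_{\S{1}}w\,d\theta=0$, and the assumption $\varphi'\varphi'-\varphi\varphi''\le 1$ gives the pointwise bound $\beta w^{2}\le w^{2}$, so that
\begin{equation*}
  \ud{L}{t}(t)\le \int_{\S{1}}w^{2}-w_{\theta}^{2}\,d\theta.
\end{equation*}
The one external fact I need is Wirtinger's (Poincar\'e) inequality on the circle: if $w\in C^{1}(\S{1})$ has $\int_{\S{1}}w\,d\theta=0$, then $\int_{\S{1}}w_{\theta}^{2}\,d\theta\ge\int_{\S{1}}w^{2}\,d\theta$, with equality precisely when $w=a\cos\theta+b\sin\theta$. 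This follows at once by expanding $w$ in a Fourier series and applying Parseval, the constant mode being absent and each nonconstant mode $e^{in\theta}$ contributing $n^{2}\ge 1$ times as much to $\int w_{\theta}^{2}$ as to $\int w^{2}$. Combining the two displays gives $\ud{L}{t}(t)\le 0$.

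For the equality statement, suppose $\ud{L}{t}(t)=0$ at some time. Then both inequalities above must be equalities. Equality in the first gives $\int_{\S{1}}(1-\beta)w^{2}\,d\theta=0$; since $1-\beta\ge 0$ and $w^{2}\ge 0$ are continuous, this forces, for each $\theta$, either $w(\theta)=0$ or $\beta(\theta)=1$. Equality in Wirtinger forces $w=a\cos\theta+b\sin\theta$ for some real $a,b$ (which may vary with $t$). If $(a,b)=(0,0)$, then $r_{s}=w\equiv 0$ and $\gamma(t)$ is a slice $C_{r}$. If $(a,b)\neq(0,0)$, then $w$ vanishes only at the two antipodal zeros of $a\cos\theta+b\sin\theta$, so $\beta(\theta)=1$ on a dense open subset of $\S{1}$, whence $\beta\equiv 1$ along $\gamma(t)$ by continuity; and $r_{s}=a\cos\theta+b\sin\theta$, as claimed. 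Conversely, each of these two situations does give $\ud{L}{t}(t)=0$: if $r_{s}\equiv 0$ the integrand of \eqref{eq:2.2} vanishes, and if $\beta\equiv 1$ with $r_{s}=a\cos\theta+b\sin\theta$ then that integrand equals $(a^{2}-b^{2})\cos 2\theta+2ab\sin 2\theta$, whose integral over $\S{1}$ is zero.

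I do not anticipate a genuine obstacle: the inequality is a two-line consequence of $\beta\le 1$ followed by Wirtinger. The only delicate point is the sharpness of the equality characterization, where one must note that a nonzero first harmonic has isolated zeros (so the pointwise dichotomy ``$w=0$ or $\beta=1$'' upgrades by continuity to ``$\beta\equiv 1$ along $\gamma(t)$''), and then verify the converse so that the two exhibited families really are the equality cases. A small bookkeeping remark worth stating explicitly is that $r_{s}\equiv 0$ is exactly the condition that $\gamma$ be one of the circles $C_{r}$, and that $\beta\equiv 1$ along a non-slice curve of this type places $\gamma$ inside a space-form region of $N$.
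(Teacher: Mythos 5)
Your argument is correct and is essentially the paper's own proof: apply the formula \eqref{eq:2.2}, use $\varphi'\varphi'-\varphi\varphi''\le 1$ pointwise, and then the Poincar\'e--Wirtinger inequality for mean-zero functions on $\S{1}$, with the equality case read off from the two inequalities. Your treatment of the equality case is in fact a bit more careful than the paper's (which simply asserts the dichotomy), since you observe that a nonzero first harmonic has only isolated zeros, so the pointwise condition $(1-\beta)w^{2}\equiv 0$ upgrades by continuity to $\beta\equiv 1$ along the curve, and you verify the converse.
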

\begin{proof} 
  Theorem 2.3 guarantees
  \begin{equation}
    \ud{L(t)}{t}\le \int (r_{s})^{2}-(r_{s\theta})^{2}\,d\theta-2\pi\left[\int r_{s}\,d\theta\right]\le 0,
  \end{equation}  
  where we have used the classical Poincar\'e inequality on the
  circle. The first inequality is equality only when
  $\varphi'\varphi'-\varphi\varphi''\equiv 1$ or $r_{s}\equiv 0$, and
  the second inequality is equality only when $r_{s}=a\cos\theta+b\sin\theta,$ as desired.
\end{proof}
\section{PDE Estimates for Guan-Li curve flow.}
The goal of this section is to prove long-time existence for the curve flow with speed function $\varphi'-u\kappa$ and convergence to circle as $t\to \infty$, assuming smooth radial graph $\gamma_{0}$ as initial data.

Following \cite{MCFSF}, we work in the radial graph parametrization: we look for solutions of the form
\begin{equation}\label{eq:3.1}
  \gamma(t)=\im(\theta\mapsto (\rho(\theta,t),\theta)),
\end{equation}
for some $\rho:\S{1}\cross (0,\infty)\to (0,R)$. Parametrizing the
flow \eqref{eq:3.1} using the radial graph parametrization, we see
that
\begin{equation*}
  \bd_{t}X=\rho_{t}\bd_{r}=\rho_{t}(\frac{u}{\varphi}\nu+r_{s}\bd_{s}),
\end{equation*}
and thus $\gamma(t)$ in \eqref{eq:3.1} evolves with speed function $\rho_{t}u/\varphi$. Thus, if $\rho$ satisfies $\rho_{t}=\varphi f/u$, \eqref{eq:3.1} evolves with speed function $f$. A straightforward computation yields 
\begin{equation*}
  \Phi_{s}=\frac{\varphi \rho_{\theta}}{\sqrt{\varphi^{2}+\rho_{\theta}^{2}}}\spa \Phi_{ss}=\frac{\varphi^{3}\rho_{\theta\theta}+\varphi'\rho_{\theta}^{4}}{(\varphi^{2}+\rho_{\theta}^{2})^{2}},\spa (\text{using }\bd_{\theta}=\sqrt{\varphi^{2}+\rho_{\theta}^{2}}\,\bd_{s}),
\end{equation*}
and $u=\varphi^{2}/\sqrt{\varphi^{2}+\rho_{\theta}^{2}}$.
Therefore, we seek a solution of the following problem
\begin{equation}
  \label{eq:3.2}
  \left\{
      \begin{aligned}
&\rho\in C^{\infty}(\S{1}\cross[0,\infty))\\
&\rho(\theta,0)=\rho_{0}(\theta)\spa (\gamma_{0}=\im(\theta\mapsto(\rho_{0}(\theta),\theta)))\\
&\rho_{t}=\frac{\varphi^{3}\rho_{\theta\theta}+\varphi'\rho_{\theta}^{4}}{\varphi(\varphi^{2}+\rho_{\theta}^{2})^{3/2}}
      \end{aligned}
\right..
\end{equation}
\subsection{\textit{C}$^{\mathbf{0}}$ and gradient estimate for solutions of \eqref{eq:3.2}}
Following \cite{MCFSF}, we first prove solutions of \eqref{eq:3.2} satisfy a $C^{0}$ a priori estimate. At critical points of $\rho$ we have $\rho_{\theta}=0$ and thus
\begin{equation*}
  \rho_{t}=\frac{1}{\varphi}\rho_{\theta\theta},
\end{equation*}
and, by the standard maximum principle, this implies that
\begin{equation}\label{eq:3.3}
  \min\rho_{0}\le\rho(\theta,t)\le \max \rho_{0}.
\end{equation}
The gradient estimate requires more work. We begin by showing that $\omega:=\rho_{\theta}^{2}$ satisfies a parabolic evolution equation. We compute
\begin{equation}
  \begin{aligned} \omega_{t}=2\rho_{\theta}\rho_{\theta,t}&=\frac{\varphi^{3}}{\varphi(\varphi^{2}+\rho_{\theta}^{2})^{3/2}}\omega_{\theta\theta}-2 \frac{\varphi^{3}}{\varphi(\varphi^{2}+\rho_{\theta}^{2})^{3/2}}(\rho_{\theta\theta})^{2}\\+&\left[\pd{}{\theta}\braces{\frac{\varphi^{2}}{(\varphi^{2}+\rho_{\theta}^{2})^{3/2}}}+\frac{2\varphi'\omega}{\varphi(\varphi^{2}+\rho_{\theta}^{2})^{3/2}}-\frac{3\varphi\rho_{\theta}^{5}}{\varphi(\varphi^{2}+\rho_{\theta}^{2})^{5/2}}\right]\omega_{\theta}\\ &\hphantom{blah blah blah blah blah blah blah}-\frac{2(4\varphi'\varphi'- \varphi''\varphi)\rho_{\theta}^{6}}{(\varphi^{2}+\rho_{\theta}^{2})^{5/2}}-\frac{2(\varphi'\varphi'-\varphi''\varphi)\rho_{\theta}^{8}}{(\varphi^{2}+\rho_{\theta}^{2})^{5/2}},
  \end{aligned}  
  \end{equation}
  and abbreviating yields
  \begin{equation}    \omega_{t}=A(\rho,\omega)\omega_{\theta\theta}-2A(\rho,\omega)(\rho_{\theta\theta})^{2}+B(\rho,\omega,\omega_{\theta})\omega_{\theta}-C_{1}(\rho,\omega)\omega^{3}-C_{2}(\rho,\omega)\omega^{4}
  \end{equation}
  We now make the assumption that
  $\varphi'\varphi'-\varphi''\varphi> 0$, which enables us to
  conclude that $C_{1}> 0$ and $C_{2}> 0$. We remark that this
  assumption also plays a key role in proving the convergence to a circle. Supposing that $\omega$ achieves a
  positive maximum at $(\theta,t)$ for a positive time $t>0$, we have
  \begin{equation*}
    \omega_{t}(\theta,t)=A(\rho,\omega)\omega_{\theta\theta}(\theta,t)-C_{1}(\rho,\omega)\omega^{3}-C_{2}(\rho,\omega)\omega^{4},
1  \end{equation*}
  which implies that $\omega_{t}<0$, a contradiction. Thus $\omega$
  can only attain its maximum on the initial data, so
  \begin{equation}
    0\le \omega(t>0)<\omega(t=0).
  \end{equation}
  \subsection{Long-time existence of solutions of equation \eqref{eq:3.2}.}
  Appealing to the classical parabolic theory for quasi-linear
  parabolic equations (see, for instance, \cite{pdetext}), the higher
  regularity a priori estimates for $\rho$ follow from the uniform
  $C^{0}$ and gradient estimates. To be precise, we have:
  \begin{thrm}
    Let $\rho$ solve \eqref{eq:3.2}, and suppose that $\varphi'\varphi'-\varphi\varphi''>0$. Then, for any integer $k\ge 0$, and any $t_{0}>0$, there exists a
    constant $C(t_{0},\rho_{0},k)$ such that
    \begin{equation}
      \label{eq:3.7}
      \abs{\rho(\cdot,t)}_{C^{k}}<C(t_{0},\rho_{0},k)\spa t>t_{0}.
    \end{equation}
  \end{thrm}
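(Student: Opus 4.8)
The plan is to run a standard parabolic bootstrap anchored on the two a priori bounds already in hand: the $C^{0}$ estimate \eqref{eq:3.3}, which keeps $\rho(\cdot,t)$ inside the fixed compact interval $I:=[\min\rho_{0},\max\rho_{0}]\subset(0,R)$ for all $t$, and the gradient estimate of the previous subsection, which bounds $\omega:=\rho_{\theta}^{2}$ on $\S{1}\times(0,\infty)$ by $\max_{\S{1}}\omega(\cdot,0)$. The first step is to observe that, under these bounds, \eqref{eq:3.2} is \emph{uniformly parabolic}, uniformly in $t\ge 0$: writing it as $\rho_{t}=a\,\rho_{\theta\theta}+b$ with $a=\varphi^{2}/(\varphi^{2}+\rho_{\theta}^{2})^{3/2}$ and $b=\varphi'\rho_{\theta}^{4}/(\varphi(\varphi^{2}+\rho_{\theta}^{2})^{3/2})$, the positivity and continuity of $\varphi$ give $0<\min_{I}\varphi\le\varphi\le\max_{I}\varphi$ and two-sided bounds on $\varphi',\varphi''$ along the solution, whence $0<\lambda\le a\le\Lambda$ and $\abs{b}\le M$ with $\lambda,\Lambda,M$ depending only on $\varphi$ and $\rho_{0}$; moreover $a$ and $b$ are smooth functions of $(\theta,t)$ and of $(\rho,\rho_{\theta})$, the latter ranging in a fixed compact set.

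Granted uniform parabolicity, the regularity ladder is routine, and this is where I would invoke the classical theory of \cite{pdetext}. First, quasilinear parabolic theory gives that a bounded solution of a uniformly parabolic quasilinear equation with bounded gradient has H\"older continuous gradient, with an interior-in-time estimate: for some $\alpha\in(0,1)$ and every $t\ge t_{0}>0$, $\abs{\rho}_{C^{1+\alpha,(1+\alpha)/2}(\S{1}\times[t,t+1])}\le C(t_{0},\rho_{0})$ uniformly in $t$. Then $a$ and $b$, being smooth functions of their arguments, are uniformly $C^{\alpha,\alpha/2}$-bounded, so \eqref{eq:3.2} may be treated as a \emph{linear} uniformly parabolic equation $\rho_{t}=a\rho_{\theta\theta}+b$ with uniformly H\"older coefficients, and interior parabolic Schauder estimates give a uniform bound on $\abs{\rho}_{C^{2+\alpha,1+\alpha/2}}$. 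Next I would differentiate \eqref{eq:3.2} in $\theta$: $v:=\rho_{\theta}$ solves a linear uniformly parabolic equation $v_{t}=a\,v_{\theta\theta}+\tilde b\,v_{\theta}+\tilde c\,v$ whose coefficients are smooth functions of $\rho,\rho_{\theta},\rho_{\theta\theta}$ (and of $\varphi,\varphi',\varphi''$ evaluated at $\rho$), hence uniformly $C^{\alpha,\alpha/2}$-bounded by the previous step, so Schauder yields a uniform $C^{2+\alpha}$ bound on $v$, that is, a uniform $C^{3+\alpha}$ bound on $\rho$. Iterating — at the $k$th stage differentiating $k$ times and invoking Schauder with coefficients controlled by the bounds of stage $k-1$ — produces, for each $k$, a uniform estimate $\abs{\rho(\cdot,t)}_{C^{k}}\le C(t_{0},\rho_{0},k)$ valid for all $t>t_{0}$, which is \eqref{eq:3.7}.

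I expect the only substantive point — and the one place where the geometry of $N$ enters, as opposed to generic parabolic machinery — to be the very first rung: passing from a \emph{bounded} gradient to a \emph{H\"older} gradient uniformly in time. This is possible precisely because the equation is uniformly parabolic, which in turn rests on the $C^{0}$ bound \eqref{eq:3.3} (confining $r$ to a compact set on which $\varphi$ is bounded above and away from $0$) together with the gradient bound, whose proof used the hypothesis $\varphi'\varphi'-\varphi\varphi''>0$; beyond that, no further input from the warped-product structure is needed. A minor technical remark: the restriction $t>t_{0}$ reflects the need for a time margin $[t-\delta,t]$ before the interior-in-time estimates can be applied; if one wishes, $t_{0}$ may be taken to be $0$ at the cost of letting the constant depend on the smooth norms of $\rho_{0}$, running the same bootstrap from $t=0$.
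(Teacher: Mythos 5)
Your proposal is correct and follows the same route as the paper, which simply appeals to classical quasilinear parabolic theory to deduce the higher-order estimates from the uniform $C^{0}$ bound \eqref{eq:3.3} and the gradient bound; you have merely spelled out the standard bootstrap (uniform parabolicity on a compact range of $(\rho,\rho_{\theta})$, then the H\"older gradient estimate, then iterated Schauder) that the paper leaves implicit. No gaps.
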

  These estimates guarantees
  long-time existence and uniqueness and thus the first part of
  Theorem 1.1 (see, for instance, the proof of Theorem 8.3 in
  \cite{Lieberman}).
 \subsection{Convergence to a circle as $t\to\infty$.}
 The goal of this subsection is to prove the second part of Theorem
 1.1, that $r|_{\gamma(t)}\to \text{constant}.$
 
 Suppose that $\rho$ is a solution to \eqref{eq:3.2}. As shown in the
 previous section, $\omega=\rho_{\theta}^{2}$ satisfies the following
 \textsc{pde}
 \begin{equation*}
   \omega_{t}=A(\rho,\omega)\omega_{\theta\theta}-2A(\rho,\omega)(\rho_{\theta\theta})^{2}+B(\rho,\omega,\omega_{\theta})\omega_{\theta}-C_{1}(\rho,\omega)\omega^{3}-C_{2}(\rho,\omega)\omega^{4},
 \end{equation*}
 with constants $a_{i},b_{1},c_{i,j}$, such that
 $0\le a_{1}\le A\le a_{2}<\infty$,
 $0< c_{i,1}\le C_{i}\le c_{i,2}<\infty$, $i=1,2$. 

Let $v:[0,\infty)\to [0,\infty)$ be the unique solution to
\begin{equation*}
  v_{t}=-c_{1,1}v^{3}-c_{2,1}v^{4}\spa v(0)=\max \omega+\epsilon\spa (\epsilon>0)
\end{equation*}
It is clear that $v$ is always positive, and that $v=O(t^{-1/2})$. 

Letting $u=\omega-v$, which satisfies
\begin{equation}\label{eq:1231}
  \begin{aligned}
  u_{t}\le A(\rho,\omega)&u_{\theta\theta}-2A(\rho,\omega)(u_{\theta\theta})^{2}+B(\rho,\omega,\omega_{\theta})u_{\theta}\\&-c_{1,1}u(\omega^{2}+v\omega+v^{2})-c_{2,1}u(\omega^{3}+v\omega^{2}+v^{2}\omega+v^{3}).    
  \end{aligned}
\end{equation}
Suppose now that $u$ achieves a positive maximum at $(\theta,t)$. Then
evaluating the right hand side of \eqref{eq:1231} yields
$u_{t}(\theta,t)<0$, which is a contradiction. Thus $u$ can not
achieve a positive maximum, and so $\omega\le v$, and since $v\to 0$
uniformly as $t\to \infty$, we deduce that $\omega\to 0$ uniformly,
and thus $\rho(\cdot,t)$ uniformly approaches a constant function as
$t\to \infty$. This proves that the curve flow converges to a circle.

\section{Monotonicity of length and the Isoperimetric inequality}
Let $\gamma(t)$ be a solution to the flow with speed function
$\varphi'-u\kappa$. Unfortunately, it seems difficult to directly
prove that $L[\gamma(t)]$ is monotonically nonincreasing - however, we
will prove that $L[\gamma(t)]$ is monotonically nonincreasing for
\textbf{bilaterally symmetric} curves:
\subsection{Bilaterally symmetric radial graphs}
  For each $\alpha\in \S{1}$, consider the isometry $\mathfrak{R}_{\alpha}:N\to N$ defined by
\begin{equation*}
  \mathfrak{R}_{\alpha}:(r,\theta)\mapsto (r,\alpha^{2}\theta^{-1}),
\end{equation*}
where multiplication happens in $\S{1}\subset \C{}$. Note that the antipodal points $\pm\alpha$ are fixed, and so we can consider this as a reflection through a line:
\begin{equation*}
  \begin{tikzpicture}[rotate=30]
    \draw[->] (1.5,0) arc (0:-330:0.1 and 0.3);
    \draw[white,thick] (1.5,0) arc (0:-30:0.1 and 0.3);

    \draw (-2,0) -- (2,0);
    \draw[variable=\t,domain=180:360,samples=70] plot ({cos(\t)},{sin(\t)+0.3*sin(2*\t)*sin(2*\t)-0.3*sin(2*\t)*sin(4*\t)});
    \draw[variable=\t,domain=180:360,samples=70] plot ({cos(\t)},{-(sin(\t)+0.3*sin(2*\t)*sin(2*\t)-0.3*sin(2*\t)*sin(4*\t))});
    \node at (2,0) [right] {$\alpha$};
  \end{tikzpicture}
\end{equation*}
It is clear that $\mathfrak{R}_{\alpha}:N\to N$ is an isometry. If
$\gamma\subset N$ is a curve such that there is some $\alpha\in \S{1}$
such that $\gamma$ is fixed under $\mathfrak{R}_{\alpha}$, then we say
$\gamma$ has a \textbf{bilateral symmetry} with axis $\alpha$.

\begin{thrm} Let $0\le \varphi'\varphi'-\varphi\varphi''\le 1$.
  Suppose $\gamma_{0}$ is a smooth, bilaterally symmetric with axis
  $\alpha$, radial graph, and suppose $\gamma(t)$ is a solution to the
  flow with initial data $\gamma(0)=\gamma_{0}$. Then $\gamma(t)$ is
  also bilaterally symmetric with axis $\alpha$ and furthermore
  $L[\gamma(t)]$ is nonincreasing.
\end{thrm}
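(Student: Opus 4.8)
The plan is to reduce both assertions to results already in hand: preservation of the bilateral symmetry will follow from the uniqueness of the flow together with the manifest symmetry of the scalar equation \eqref{eq:3.2}, and the monotonicity of $L$ will follow from Corollary~\ref{cor:2.4} once the symmetry is used to verify $\int_{\S{1}}r_{s}\,d\theta=0$.

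First I would rephrase the symmetry in the radial graph picture. Writing $\alpha=e^{i\beta}$, the isometry $\mathfrak{R}_{\alpha}$ acts on the angular coordinate as the reflection $\theta\mapsto 2\beta-\theta$, so $\gamma_{0}=\im(\theta\mapsto(\rho_{0}(\theta),\theta))$ is fixed by $\mathfrak{R}_{\alpha}$ precisely when $\rho_{0}(2\beta-\theta)=\rho_{0}(\theta)$ for all $\theta$. Let $\rho$ be the (unique, smooth, long-time) solution of \eqref{eq:3.2} with this initial data, and set $\psi(\theta,t):=\rho(2\beta-\theta,t)$. The right-hand side of \eqref{eq:3.2} depends on $\rho$ only through $\rho$, $\rho_{\theta\theta}$ and $\rho_{\theta}^{2}$ — the first $\theta$-derivative entering solely via the even quantities $\rho_{\theta}^{4}$ and $\varphi^{2}+\rho_{\theta}^{2}$ — and has no explicit dependence on $\theta$; hence it is invariant under $\theta\mapsto 2\beta-\theta$, and a direct substitution shows that $\psi$ is again a solution of \eqref{eq:3.2}, with initial data $\psi(\theta,0)=\rho_{0}(2\beta-\theta)=\rho_{0}(\theta)$. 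Since $\theta\mapsto 2\beta-\theta$ is a smooth diffeomorphism of $\S{1}$, the function $\psi$ lies in the same class of solutions in which uniqueness holds (Theorem~1.1), so $\psi\equiv\rho$; equivalently $\rho(2\beta-\theta,t)=\rho(\theta,t)$ for all $t$, which says exactly that $\gamma(t)$ is bilaterally symmetric with axis $\alpha$.

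Next I would deduce the monotonicity. The $C^{0}$ estimate \eqref{eq:3.3} and the higher-order a priori estimates of Section~3 keep $\gamma(t)$ a smooth radial graph with $\rho(\cdot,t)\in(0,R)$, so the length-variation formula \eqref{eq:2.2} applies:
\[
  \ud{L(t)}{t}=\int_{\S{1}}(\varphi'\varphi'-\varphi\varphi'')(r_{s})^{2}-(r_{s\theta})^{2}\,d\theta,\qquad r_{s}=\frac{\rho_{\theta}}{\sqrt{\varphi^{2}+\rho_{\theta}^{2}}}.
\]
Differentiating $\rho(2\beta-\theta,t)=\rho(\theta,t)$ in $\theta$ gives $\rho_{\theta}(2\beta-\theta,t)=-\rho_{\theta}(\theta,t)$, so $r_{s}$, viewed as a function of $\theta$, obeys $r_{s}(2\beta-\theta)=-r_{s}(\theta)$; substituting $\theta\mapsto 2\beta-\theta$ in $\int_{\S{1}}r_{s}\,d\theta$ then forces this integral to vanish. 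Since $0\le\varphi'\varphi'-\varphi\varphi''\le 1$ by hypothesis, Corollary~\ref{cor:2.4} applies and gives $dL/dt\le 0$, i.e.\ $L[\gamma(t)]$ is nonincreasing.

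I do not expect a genuine obstacle: the whole argument is a short consequence of the symmetry of the evolution equation and the Poincar\'e inequality already used in Corollary~\ref{cor:2.4}. The only points that need attention are the two bookkeeping ones flagged above — that the reflected function $\psi$ truly lies in the uniqueness class, and that the flow stays a radial graph valued in $(0,R)$ so that \eqref{eq:2.2} and Corollary~\ref{cor:2.4} are applicable — and both are guaranteed by the estimates established in Section~3. (One could moreover read off from Corollary~\ref{cor:2.4} exactly when $dL/dt=0$, but this is not required for the statement.)
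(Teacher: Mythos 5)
Your proposal is correct and follows essentially the same route as the paper: preservation of the bilateral symmetry by uniqueness (you verify it at the level of the scalar equation \eqref{eq:3.2}, the paper via the isometry $\mathfrak{R}_{\alpha}$ acting on $\gamma(t)$, which is the same idea), and then the oddness of $r_{s}$ under the reflection to get $\int_{\S{1}}r_{s}\,d\theta=0$ and invoke Corollary~\ref{cor:2.4}. No further comment is needed.
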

\begin{proof}
  The fact that $\gamma(t)$ is also bilaterally symmetric follows from
  uniqueness of solutions and the fact that
  $\mathfrak{R}_{\alpha}(\gamma(t))$ also is a solution to the
  flow. 

  If $\gamma$ is bilaterally symmetric radial graph, and
  $p\in \gamma$, then it is an easy computation to show that
  $r_{s}(p)=-r_{s}(\mathfrak{R}_{\alpha}(p))$, and so
  $\int_{\S{1}}r_{s}\,d\theta=0$. Then Corollary 2.4 implies that
  $\ud{}{t}L[\gamma(t)]\le 0$, as desired. 
\end{proof}
\begin{cor}
   Let $0\le \varphi'\varphi'-\varphi\varphi''\le 1$. If $\gamma_{0}$ is a smooth bilaterally symmetric radial graph, then
  $L[\gamma_{0}]\ge F(A[\gamma_{0}])$ (cf. subsection 2.1 for
  definition of $F$).
\end{cor}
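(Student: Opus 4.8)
The plan is to run the Guan--Li curve flow starting from $\gamma_{0}$ and extract the inequality from its behaviour as $t\to\infty$. Since $\gamma_{0}$ is a smooth radial graph and $\varphi'\varphi'-\varphi\varphi''\ge 0$, Theorem 1.1 gives a unique smooth flow $\gamma(t)$ with $\gamma(0)=\gamma_{0}$, defined for all $t\ge 0$, with $\min r|_{\gamma_{0}}\le r|_{\gamma(t)}\le\max r|_{\gamma_{0}}$ (so every $\gamma(t)$ lies over a fixed compact subinterval $I\subset(0,R)$) and with $r|_{\gamma(t)}$ tending to a constant. Writing $\gamma(t)$ as the radial graph of $\rho(\cdot,t)$, its speed function is $\varphi'-u\kappa=\Phi_{ss}$, so $\int_{\gamma(t)}\Phi_{ss}\,ds=0$ and Theorem \ref{thm:2.2} yields $\ud{}{t}A[\gamma(t)]=0$; hence $A[\gamma(t)]=A[\gamma_{0}]$ for all $t$. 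Finally, since $\gamma_{0}$ is bilaterally symmetric and $\varphi'\varphi'-\varphi\varphi''\le 1$, the preceding theorem shows that $\gamma(t)$ stays bilaterally symmetric and $L[\gamma(t)]$ is nonincreasing, so $L[\gamma_{0}]=L[\gamma(0)]\ge L[\gamma(t)]$ for every $t\ge 0$.

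I would then compute $\lim_{t\to\infty}L[\gamma(t)]$ and see that it is the length of a slice. The gradient-decay argument of Section 3 gives $\omega:=\rho_{\theta}^{2}\to 0$ uniformly on $\S{1}$, so $\rho_{\theta}\to 0$ uniformly and the oscillation $\max_{\theta}\rho(\cdot,t)-\min_{\theta}\rho(\cdot,t)\to 0$. Combining this with the conserved quantity $A[\gamma(t)]=\int_{\S{1}}\Phi(\rho(\theta,t))\,d\theta=A[\gamma_{0}]$ and the fact that $\Phi$ is continuous and strictly increasing, the squeeze
\[
  2\pi\,\Phi\!\left(\min_{\theta}\rho(\cdot,t)\right)\le A[\gamma_{0}]\le 2\pi\,\Phi\!\left(\max_{\theta}\rho(\cdot,t)\right)
\]
forces $\min_{\theta}\rho(\cdot,t)\to r_{\infty}$, where $r_{\infty}\in I$ is the unique radius with $A(C_{r_{\infty}})=2\pi\Phi(r_{\infty})=A[\gamma_{0}]$. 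Together with the vanishing oscillation, $\rho(\cdot,t)\to r_{\infty}$ uniformly; since $\varphi$ is uniformly continuous on the compact set $I$, the integrand of $L[\gamma(t)]=\int_{\S{1}}\sqrt{\varphi(\rho)^{2}+\rho_{\theta}^{2}}\,d\theta$ converges uniformly to $\varphi(r_{\infty})$, and therefore $L[\gamma(t)]\to 2\pi\varphi(r_{\infty})=L(C_{r_{\infty}})$.

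Assembling: letting $t\to\infty$ in $L[\gamma_{0}]\ge L[\gamma(t)]$ gives $L[\gamma_{0}]\ge L(C_{r_{\infty}})$, and by the definition of $F$ in subsection 2.1 (i.e.\ $L(r)^{2}=F(A(r))$ for slices) we have $L(C_{r_{\infty}})^{2}=F(A(C_{r_{\infty}}))=F(A[\gamma_{0}])$. Squaring the nonnegative inequality then yields $L[\gamma_{0}]^{2}\ge F(A[\gamma_{0}])$, which is the isoperimetric inequality of subsection 2.1.

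I expect the only genuine subtlety to be the passage to the limit $t\to\infty$: one needs the convergence $\rho(\cdot,t)\to r_{\infty}$ to hold in a topology strong enough ($C^{1}$ in $\theta$, uniformly) to pass to the limit inside the length functional, and one needs area conservation together with the strict monotonicity of $\Phi$ in order to identify \emph{which} slice $C_{r_{\infty}}$ is the limit. The uniform $C^{0}$ bound and the uniform gradient decay established in Section 3 provide exactly this; everything else is assembly of Theorem 1.1, Theorem \ref{thm:2.2}, and the preceding theorem.
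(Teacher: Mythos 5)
Your proposal is correct and follows exactly the paper's argument: run the Guan--Li flow, use area conservation and the length monotonicity for bilaterally symmetric curves from the preceding theorem, and compare with the limiting slice. The only difference is that you spell out the identification of the limit circle $C_{r_{\infty}}$ (via the uniform gradient decay and $2\pi\Phi(r_{\infty})=A[\gamma_{0}]$) and the passage of $L[\gamma(t)]$ to $2\pi\varphi(r_{\infty})$, which the paper compresses into the single line $L[\gamma_{0}]\ge L[\gamma(\infty)]=F(A[\gamma(\infty)])$; you also correctly note that, with $F$ as defined in subsection 2.1, the inequality should read $L[\gamma_{0}]^{2}\ge F(A[\gamma_{0}])$.
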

\begin{proof}
  Using monotonicity of length and the constancy of area, we deduce
  $L[\gamma_{0}]\ge
  L[\gamma(\infty)]=F(A[\gamma(\infty)])=F[A(\gamma_{0})]$, as
  desired. 
\end{proof}
\begin{thrm}[Isoperimetric inequality, without equality case]\label{thrm:4.3}
 Let $0\le \varphi'\varphi'-\varphi\varphi''\le 1$. If $\gamma_{0}$ is any piecewise $C^{1}$ and Lipshitz radial graph, then
  $L[\gamma_{0}]\ge F[A(\gamma_{0})]$.
\end{thrm}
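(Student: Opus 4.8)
The plan is to reduce Theorem~\ref{thrm:4.3} to the preceding corollary (the case of \emph{smooth, bilaterally symmetric} radial graphs) by first symmetrizing and then smoothing. Write $\gamma_{0}=\im(\theta\mapsto(\rho_{0}(\theta),\theta))$ with $\rho_{0}\in W^{1,\infty}(\S{1})$, and recall that
\[
  L[\gamma_{0}]=\int_{\S{1}}\sqrt{\varphi(\rho_{0})^{2}+(\rho_{0})_{\theta}^{2}}\,d\theta,
  \qquad
  A[\gamma_{0}]=\int_{\S{1}}\Phi(\rho_{0})\,d\theta .
\]
Fix the axis $\alpha=1$, so that $\mathfrak{R}_{\alpha}$ is the map $\theta\mapsto\theta^{-1}$ and a radial graph is $\mathfrak{R}_{\alpha}$-invariant precisely when its radial function is invariant under $\theta\mapsto\theta^{-1}$. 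Let $\rho^{*}$ be the symmetric decreasing rearrangement of $\rho_{0}$ on $\S{1}$ about this axis (so $\rho^{*}(\theta^{-1})=\rho^{*}(\theta)$ and $\rho^{*}$ is nonincreasing on a half-circle), and set $\gamma^{*}=\im(\theta\mapsto(\rho^{*}(\theta),\theta))$. Since $\min\rho_{0}\le\rho^{*}\le\max\rho_{0}$, this is a genuine radial graph in $N$, and it is bilaterally symmetric with axis $\alpha$.

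Two facts about this symmetrization are needed. First, $A[\gamma^{*}]=A[\gamma_{0}]$: this is immediate from the equimeasurability of $\rho^{*}$ and $\rho_{0}$ together with $A=\int_{\S{1}}\Phi\circ\rho$ and the monotonicity of $\Phi$. Second, $L[\gamma^{*}]\le L[\gamma_{0}]$: because $N$ is a warped product, the length integrand $\Lambda(r,p):=\sqrt{\varphi(r)^{2}+p^{2}}$ is independent of $\theta$, is continuous, and $p\mapsto\Lambda(r,p)$ is even, convex, and nondecreasing on $[0,\infty)$; for integral functionals $\int_{\S{1}}\Lambda(\rho,|\rho_{\theta}|)\,d\theta$ with such integrands the one-dimensional circular rearrangement is nonincreasing (a P\'olya--Szeg\H{o}-type inequality). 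I expect verifying this last inequality in the precise form needed --- with the integrand depending on the function value $r$ and not merely on the gradient --- to be the main technical point; it can be obtained either by quoting a suitable generalized P\'olya--Szeg\H{o} theorem, or, more hands-on, by first approximating $\rho_{0}$ by a Lipschitz function with only finitely many critical values and realizing $\rho^{*}$ through finitely many level-set rearrangement moves, each of which visibly does not increase $L$. (The same argument, applied with $\Lambda(r,p)=|p|^{q}$ and $q\to\infty$, also shows $\rho^{*}$ is Lipschitz with $\mathrm{Lip}(\rho^{*})\le\mathrm{Lip}(\rho_{0})$, which is used below.)

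Finally, upgrade $\gamma^{*}$ --- which is only Lipschitz --- to the smooth setting of the corollary. Mollify on $\S{1}$ with an \emph{even} kernel: put $\rho^{*}_{\delta}=\rho^{*}*\eta_{\delta}$ where $\eta_{\delta}(\theta)=\eta_{\delta}(\theta^{-1})$. Then $\rho^{*}_{\delta}$ is smooth, takes values in $[\min\rho_{0},\max\rho_{0}]$, and is still invariant under $\theta\mapsto\theta^{-1}$ (convolution with an even kernel commutes with the reflection), so $\gamma^{*}_{\delta}$ is a smooth bilaterally symmetric radial graph. Since $\rho^{*}$ is Lipschitz, $\rho^{*}_{\delta}\to\rho^{*}$ uniformly and $(\rho^{*}_{\delta})_{\theta}\to(\rho^{*})_{\theta}$ a.e.\ with a uniform bound, so dominated convergence gives $A[\gamma^{*}_{\delta}]\to A[\gamma^{*}]$ and $L[\gamma^{*}_{\delta}]\to L[\gamma^{*}]$. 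Applying the preceding corollary to each $\gamma^{*}_{\delta}$ and letting $\delta\to 0$, using the continuity of $F$, yields
\[
  L[\gamma_{0}]^{2}\ \ge\ L[\gamma^{*}]^{2}\ =\ \lim_{\delta\to0}L[\gamma^{*}_{\delta}]^{2}\ \ge\ \lim_{\delta\to0}F(A[\gamma^{*}_{\delta}])\ =\ F(A[\gamma^{*}])\ =\ F(A[\gamma_{0}]),
\]
which is the asserted inequality.
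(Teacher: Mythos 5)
Your strategy (symmetrize to a bilaterally symmetric radial graph without increasing $L$ or changing $A$, then mollify and invoke the corollary for smooth symmetric graphs) is viable and genuinely different from the paper's, but it rests on a step you explicitly leave unproved, and that step is the heart of the argument: the inequality $L[\gamma^{*}]\le L[\gamma_{0}]$ for the circular symmetric-decreasing rearrangement. This is not an off-the-shelf citation in the form you need. The classical P\'olya--Szeg\"o statements concern integrands $\int\Phi(|u'|)$ depending on the gradient alone, whereas your integrand $\sqrt{\varphi(\rho)^{2}+\rho_{\theta}^{2}}$ depends on the value of $\rho$ as well, and you need the result for the cap rearrangement on $\S{1}$ in a warped metric, not for Schwarz or Steiner symmetrization in $\R{n}$. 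The claim is in fact true and can be closed by polarization: a two-point rearrangement with respect to any reflection $\mathfrak{R}_{\beta}$ exactly preserves $\int F(\rho,|\rho_{\theta}|)\,d\theta$ for every continuous $F$ (a.e.\ pair of reflected points merely has its values and derivative magnitudes permuted), $\rho^{*}$ is a limit of iterated polarizations with a uniform Lipschitz bound, and convexity of $p\mapsto\sqrt{\varphi(r)^{2}+p^{2}}$ gives the needed lower semicontinuity. As written, though, the proposal asserts rather than proves its central inequality, and the same deferral underlies your claim that $\mathrm{Lip}(\rho^{*})\le\mathrm{Lip}(\rho_{0})$.

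The paper avoids this machinery entirely. For each axis $\alpha$ it forms the two reflected curves $\gamma_{\alpha,1},\gamma_{\alpha,2}$ and uses the exact identities $A[\gamma_{\alpha,1}]+A[\gamma_{\alpha,2}]=2A[\gamma_{0}]$ and $L[\gamma_{\alpha,1}]+L[\gamma_{\alpha,2}]=2L[\gamma_{0}]$; an intermediate-value argument in $\alpha$ yields an axis with $A[\gamma_{\alpha,1}]=A[\gamma_{\alpha,2}]=A[\gamma_{0}]$, after which the half of smaller length is a bilaterally symmetric Lipschitz radial graph with the same area and no greater length, and the mollification step proceeds as in your last paragraph. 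That route buys the reduction with nothing beyond additivity and continuity, at the cost of producing a non-canonical symmetrized curve; your route, once the rearrangement lemma is actually established (by the polarization argument above or a reference that genuinely covers value-dependent integrands on the circle), would yield a stronger and more reusable symmetrization statement. Until that lemma is supplied, there is a genuine gap.
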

\begin{proof}
  Let $\rho_{0}$ be the radial length function of $\gamma_{0}$, and, for each $\alpha$, define
  \begin{equation*}
    \rho_{\alpha,1}(\theta)=
    \begin{cases}
      \rho(\theta)\spa \text{imag}(\alpha\theta^{-1})\ge 0\\
      \rho(\alpha^{2}\theta^{-1})\spa
      \text{imag}(\alpha\theta^{-1})\le 0.
    \end{cases}\spa \text{considering $\alpha,\theta\in \S{1}\subset\C{}$}
  \end{equation*}
  \begin{equation*}
    \rho_{\alpha,2}(\theta)=
    \begin{cases}
      \rho(\theta)\spa \text{imag}(\alpha^{-1}\theta)\le 0\\
      \rho(\alpha^{2}\theta^{-1})\spa
      \text{imag}(\alpha^{-1}\theta)\ge 0.
    \end{cases}
  \end{equation*}
Then note that $\text{imag}(\alpha^{-1}\theta)\ge 0\ (\le 0)$ implies that
$$\text{imag}(\alpha^{-1}\alpha^2\theta^{-1})=\text{imag}(\alpha\theta^{-1})\le
0 \ (\ge 0),$$
and so 
$\rho_{\alpha,1}(\alpha^{2}\theta^{-1})=\rho_{\alpha,1}(\theta)$ and
$\rho_{\alpha,2}(\alpha^{2}\theta^{-1})=\rho_{\alpha,2}(\theta)$.
\begin{figure}[h]
  \centering
  \includegraphics{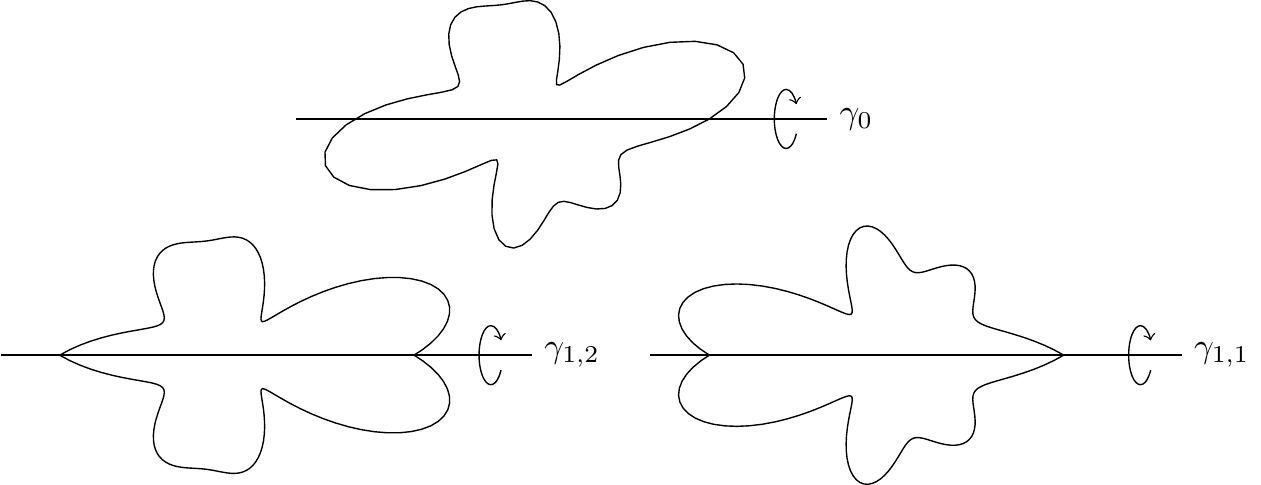}
\end{figure}
Let $\gamma_{0},\gamma_{\alpha,1}$ and $\gamma_{\alpha,2}$ be the curves defined by
$\rho_{0}$, $\rho_{\alpha,1}$, $\rho_{\alpha,2}$. By continuity there is some $\alpha\in \S{1}$ such that
$\gamma_{\alpha,1}$ and $\gamma_{\alpha,2}$ have the same
area. Suppose that $L[\gamma_{\alpha,1}]\le L[\gamma_{0}]\le
L[\gamma_{\alpha,2}]$. It therefore suffices proving that
$L[\gamma_{\alpha,1}]\ge F[A(\gamma_{\alpha,1})]=F[A(\gamma_{0})]$,
and so we may assume from the outset that $\rho_{0}$ is piecewise
$C^{1}$, Lipshitz, and bilaterally symmetric.

In this case, by convolution with mollifiers $f_{n}:\S{1}\to \R{}$
symmetric about $1\in \S{1}$, let $\rho_{n}\to \rho_{0}$ be a sequence
of smooth, symmetric, functions converging to $\rho_{0}$. Since
$\rho_{0}$ is Lipshitz, its derivative is bounded. Since
$\rho_{n,\theta}\to \rho_{0,\theta}$ uniformly on any 
compact set $K\subset \S{1}$ such that $\rho_{0}\in C^{1}(K)$, we
deduce that $L[\gamma_{n}]\to L[\gamma_{0}]<\infty$ and $A[\gamma_{n}]\to
A[\gamma_{0}]<\infty$. Then $L[\gamma_{0}]\ge F(A[\gamma_{0}])$
follows, as desired.
\end{proof}
\section{Equality case for isoperimetric inequality.}
The rest of this paper is dedicated to discussing when equality holds
in Theorem \ref{thrm:4.3}. Ideally, $L[\gamma_{0}]=F(A[\gamma_{0}])$ would imply
that $\gamma_{0}$ is a circle $\set{r}\cross\S{1}$, but, when $N$
satisfies $\varphi'\varphi'-\varphi\varphi''=1,$ there are
complications arising from translation isometries of $N$
(e.g. translated circles in the space forms $\R{2}$, $\S{2}$ and
$\H{2}$ are equality cases).

A key tool in our argument is the following existence result:
\begin{thrm}\label{thrm:5.1}
  Suppose $\varphi'\varphi'-\varphi\varphi''\ge 0$. Let $\gamma_{0}$
  be a piecewise $C^{1}$ Lipshitz radial graph. Then there is a smooth
  solution $\gamma(t)$ to the flow $\varphi'-u\kappa$ such that
  $\rho_{\gamma(t)}(\cdot,t)\to \rho_{\gamma(0)}(\cdot)$ uniformly,
  $L[\gamma(t)]\to L[\gamma_{0}]$, and $A[\gamma(t)]\to A[\gamma_{0}]$
  as $t\searrow 0$. If $\gamma_{0}$ is bilaterally symmetric, then we
  may take $\gamma(t)$ to be bilaterally symmetric as well.
\end{thrm}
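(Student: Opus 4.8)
The plan is to obtain $\gamma(t)$ as a $C^{\infty}_{\mathrm{loc}}$ limit of the smooth flows started from mollifications of $\gamma_{0}$. Write $\rho_{0}$ for the radial function of $\gamma_{0}$; since $\gamma_{0}$ is a compact radial graph in $N=(0,R)\times\S{1}$ we have $0<\min\rho_{0}\le\max\rho_{0}<R$, and $\rho_{0}$ is $\Lambda$-Lipschitz with $\Lambda:=\|\rho_{0,\theta}\|_{L^{\infty}}<\infty$. Set $\rho_{0}^{(n)}:=\rho_{0}*f_{n}$ for a standard positive mollifier $f_{n}$ on $\S{1}$; then $\rho_{0}^{(n)}\to\rho_{0}$ uniformly, $\min\rho_{0}\le\rho_{0}^{(n)}\le\max\rho_{0}$, and $\|\rho_{0,\theta}^{(n)}\|_{L^{\infty}}\le\Lambda$. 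By Theorem 1.1 (which uses exactly the hypothesis $\varphi'\varphi'-\varphi\varphi''\ge0$) there is for each $n$ a unique $\rho^{(n)}\in C^{\infty}(\S{1}\times[0,\infty))$ solving \eqref{eq:3.2} with $\rho^{(n)}(\cdot,0)=\rho_{0}^{(n)}$. The estimate \eqref{eq:3.3} and the gradient estimate give $\min\rho_{0}\le\rho^{(n)}\le\max\rho_{0}$ and $(\rho^{(n)}_{\theta})^{2}\le\Lambda^{2}$ for all $t$ and $n$, and since, as the paper notes, the higher regularity estimates of Theorem 3.1 follow from the $C^{0}$ and gradient bounds alone, one gets $\sup_{t\ge t_{0}}\|\rho^{(n)}(\cdot,t)\|_{C^{k}}\le C(t_{0},k)$ uniformly in $n$. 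Arzel\`a--Ascoli plus a diagonal argument then extract a subsequence converging in $C^{\infty}_{\mathrm{loc}}(\S{1}\times(0,\infty))$ to a smooth solution $\rho$ of \eqref{eq:3.2}; I put $\gamma(t)=\im(\theta\mapsto(\rho(\theta,t),\theta))$, which by the correspondence of Section 3 is a solution of the flow with speed $\varphi'-u\kappa$. For the symmetry statement, when $\gamma_{0}$ is bilaterally symmetric with axis $\alpha$ I would take coordinates with $\alpha=1$ and each $f_{n}$ symmetric under $\theta\mapsto\theta^{-1}$, so that $\rho_{0}^{(n)}$ is symmetric; uniqueness of the flow, $\mathfrak{R}_{1}$ being an isometry, then forces $\rho^{(n)}$, hence $\rho$, to remain symmetric, exactly as in the proof of Theorem 4.1.

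The hard part is showing $\rho(\cdot,t)\to\rho_{0}$ uniformly as $t\searrow0$; since $\rho_{0}^{(n)}\to\rho_{0}$ uniformly it suffices to exhibit a modulus $\mu(t)\to0$, independent of $n$ and of the base point, with $|\rho^{(n)}(\theta,t)-\rho_{0}^{(n)}(\theta)|\le\mu(t)$, and I would do this with barriers. Fix $\theta_{0}$ and put $R_{0}=\rho_{0}^{(n)}(\theta_{0})$; since arclength distance on $\S{1}$ is at most $\tfrac{\pi}{2}$ times the chordal distance, the Lipschitz bound gives $\rho_{0}^{(n)}(\theta)\le R_{0}+\Lambda'\sqrt{2-2\cos(\theta-\theta_{0})}$ with $\Lambda'=\tfrac{\pi}{2}\Lambda$, and I propose the supersolution
\begin{equation*}
  \bar\rho(\theta,t)=\min\Big\{\,R_{0}+\Lambda'\sqrt{2-2\cos(\theta-\theta_{0})+Ct}+Dt\,,\ \max\rho_{0}\,\Big\}.
\end{equation*}
The first branch is smooth on $\S{1}\times(0,\infty)$ — here using the chord rather than the geodesic distance is the point, since $2-2\cos(\theta-\theta_{0})$ is smooth on all of $\S{1}$, so there is no corner at the antipodal point — and $\bar\rho(\cdot,0)\ge\rho_{0}^{(n)}$. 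A short computation gives $|\bar\rho_{\theta}|\le\Lambda'$, and choosing $C=2/\varphi_{\min}$ (with $\varphi_{\min}=\min_{[\min\rho_{0},\max\rho_{0}]}\varphi$) and $D$ larger than the bound for $|\varphi'(\rho)p^{4}/(\varphi(\rho)(\varphi(\rho)^{2}+p^{2})^{3/2})|$ over $\rho\in[\min\rho_{0},\max\rho_{0}]$, $|p|\le\Lambda'$ — both constants uniform in $n$ — makes $\bar\rho$ a supersolution of \eqref{eq:3.2} on the first branch, while the constant $\max\rho_{0}$ solves the equation. As \eqref{eq:3.2} is uniformly parabolic on bounded-gradient sets and \eqref{eq:3.3} takes care of the concave corner $\{\bar\rho=\max\rho_{0}\}$, the comparison principle yields $\rho^{(n)}\le\bar\rho$, hence $\rho^{(n)}(\theta_{0},t)-R_{0}\le\Lambda'\sqrt{Ct}+Dt$; a symmetric lower barrier gives the matching bound, so $\mu(t)=\Lambda'\sqrt{Ct}+Dt$ works and passes to the limit along the subsequence, giving $\sup_{\theta}|\rho(\theta,t)-\rho_{0}(\theta)|\le\mu(t)$. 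This barrier construction, together with checking the comparison principle carefully, is the one genuinely technical point.

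With uniform convergence in hand the remaining claims are bookkeeping. Since each $\rho^{(n)}$ is a smooth solution on $[0,\infty)$ and the flow with speed $\Phi_{ss}=\varphi'-u\kappa$ preserves area, $A[\gamma^{(n)}(t)]=A[\gamma_{0}^{(n)}]$ for all $t$; letting $n\to\infty$ for fixed $t>0$ (using $C^{\infty}_{\mathrm{loc}}$ convergence on the left and $\rho_{0}^{(n)}\to\rho_{0}$ uniformly on the right) gives $A[\gamma(t)]=A[\gamma_{0}]$ for every $t>0$, so certainly $A[\gamma(t)]\to A[\gamma_{0}]$. For length, \eqref{eq:2.2} together with $\varphi'\varphi'-\varphi\varphi''\ge0$ and $(r_{s})^{2}\le1$ gives $\tfrac{d}{dt}L[\gamma^{(n)}(t)]\le\big(\sup_{[\min\rho_{0},\max\rho_{0}]}(\varphi'\varphi'-\varphi\varphi'')\big)2\pi=:C_{2}$, uniformly in $n$ and $t$, so $L[\gamma^{(n)}(t)]\le L[\gamma_{0}^{(n)}]+C_{2}t$; letting $n\to\infty$ for fixed $t>0$ and using $L[\gamma_{0}^{(n)}]\to L[\gamma_{0}]$ (as in the proof of Theorem \ref{thrm:4.3}) gives $L[\gamma(t)]\le L[\gamma_{0}]+C_{2}t$, hence $\limsup_{t\searrow0}L[\gamma(t)]\le L[\gamma_{0}]$. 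For the reverse inequality, $\rho(\cdot,t)\to\rho_{0}$ uniformly with $\|\rho_{\theta}(\cdot,t)\|_{L^{\infty}}\le\Lambda$, and the functional $\rho\mapsto\int_{\S{1}}\sqrt{\varphi(\rho)^{2}+\rho_{\theta}^{2}}\,d\theta$ is lower semicontinuous under such convergence (its integrand is continuous in $\rho$ and convex in $\rho_{\theta}$), so $\liminf_{t\searrow0}L[\gamma(t)]\ge L[\gamma_{0}]$. Combining the two gives $L[\gamma(t)]\to L[\gamma_{0}]$, completing the proof.
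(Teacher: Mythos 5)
Your construction of $\gamma(t)$ — mollify $\rho_{0}$ symmetrically, run the smooth flow from each $\rho_{0}^{(n)}$, observe that the Theorem 3.1 estimates for $t\ge t_{0}$ depend only on the $C^{0}$ and Lipschitz data of $\rho_{0}^{(n)}$ and hence are uniform in $n$, and extract a $C^{\infty}_{\mathrm{loc}}(\S{1}\cross(0,\infty))$ limit by Arzel\`a--Ascoli and a diagonal argument — is exactly the paper's proof. Where you genuinely diverge is at the end: the paper simply asserts ``it is clear that $\rho(\cdot,t)\to\rho_{0}(\cdot)$ uniformly and such that the length and area are continuous as $t\searrow 0$,'' whereas this is the one step that does not follow from interior estimates alone (the $C^{k}$ bounds degenerate as $t_{0}\searrow 0$, so the diagonal limit need not a priori attach to the initial data). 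Your barrier argument supplies the missing equicontinuity in time at $t=0$: the supersolution $R_{0}+\Lambda'\sqrt{2-2\cos(\theta-\theta_{0})+Ct}+Dt$ capped at $\max\rho_{0}$ checks out — the chordal distance trick keeps it smooth at the antipode, $|\bar\rho_{\theta}|\le\Lambda'$ and $\bar\rho_{\theta\theta}\le\Lambda'/\sqrt{w}$ follow from $\sin^{2}\delta\le 2-2\cos\delta$, and $C=2/\varphi_{\min}$ absorbs the second-order term since the coefficient of $\rho_{\theta\theta}$ in \eqref{eq:3.2} is bounded by $1/\varphi_{\min}$ — and it yields the uniform modulus $\mu(t)=\Lambda'\sqrt{Ct}+Dt$ independent of $n$ and of the base point, which survives the limit. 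Likewise your treatment of $L[\gamma(t)]\to L[\gamma_{0}]$ (one-sided Lipschitz bound in $t$ from \eqref{eq:2.2} for the upper estimate, weak-$*$ lower semicontinuity of the length functional under the uniform gradient bound for the lower estimate) is a correct proof of something the paper leaves implicit; note the upper bound alone would suffice for the paper's later applications, but the two-sided statement is what the theorem claims. In short: same construction, but you have actually proved the initial-time continuity, which is the only non-routine part of the statement. The one point to present with care is the comparison principle for the capped barrier (a minimum of two supersolutions), which you correctly flag; alternatively one can invoke \eqref{eq:3.3} to stay on the smooth branch wherever it lies below $\max\rho_{0}$.
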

\begin{proof}
  As in the proof of Theorem \ref{thrm:4.3}, convolve with smooth mollifiers to
  obtain $\rho_{n}\to \rho_{0}$ a sequence of smooth radial length
  functions converging to $\rho_{0}$. If $\gamma_{0}$ (hence
  $\rho_{0}$) has a bilateral symmetry, we may take $\rho_{n}$ to have
  the same bilateral symmetry. Using the existence result Theorem 1.1 for smooth initial data, let $\rho_{n}(\cdot,t)$ be solutions to \eqref{eq:3.2} with initial data $\rho_{n}$. For each $t_{0}>0$, Theorem 3.1 guarantees that there is a constant $C(t_{0},\rho_{0},k)$ such that 
  \begin{equation}\label{eq:5.1}
    \abs{\rho_{n}(\cdot,t)}_{C^{k}}\le C(t_{0},\rho_{0},k)\spa t\ge t_{0}.
  \end{equation}
  We can take $C$ to be independent of $n$, since it only depends on the $C_{0}$ and $C_{1}$ estimates of the initial data $\rho_{n}$, and for $n$ sufficiently large, the $C_{0}$ and $C_{1}$ estimates of $\rho_{n}$ can be estimated from the $C_{0}$ and Lipshitz estimates of $\rho_{0}$.

  Now recursively define subsequences of $(\rho_{n}:n\in \mathbb{N})$ by the two properties
  \begin{enumerate}[(i)]
  \item $(\rho_{mn}:n\in \mathbb{N})$ converges to a smooth limit
    function on $[1/m,m]\cross \S{1}$ satisfying \eqref{eq:3.2} (convergence in $C^{k}$ for every $k$).
  \item $(\rho_{mn}:n\in \mathbb{N})$ is a subsequence of $(\rho_{m'n}:n\in \mathbb{N})$ if $m'>m$.
  \end{enumerate}
  It is always possible to do this because $\set{\rho_{n}:n\in
    \mathbb{N}}$ satisfies the (uniform) estimates \eqref{eq:5.1}
  (here we are using Arz\'ela-Ascoli, invoking the fact that
  $\rho_{n}:n\in \mathbb{N}$ is bounded in $C^{k+1}(\S{1}\cross[1/m,m])$).

Then, using the standard diagonal trick, $\rho_{mm}$ converges to a
smooth function $\rho$ on $C^{k}(\S{1}\cross(0,\infty))$ which
satisfies \eqref{eq:3.2}. By taking more subsequences, we may upgrade
this to $C^{\infty}(\S{1}\cross (0,\infty))$ convergence. It is clear that $\rho(\cdot,t)\to \rho_{0}(\cdot)$ uniformly and such that the length and area are continuous as $t\searrow 0$, as desired.
\end{proof}

\begin{thrm}\label{thrm:5.2}
  Let $\gamma_{0}\subset N$ be a piecewise $C^{1}$ Lipshitz radial
  graph. Suppose that $\varphi'\varphi'-\varphi\varphi''\in [0,1]$ and
  $\varphi'\varphi'-\varphi''\varphi\not\equiv 1$ on $\gamma_{0}$. If
  $L[\gamma_{0}]=F(A[\gamma_{0}])$, then $\gamma_{0}$ is a circle
  $\set{r}\cross \S{1}$.
\end{thrm}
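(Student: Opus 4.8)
The plan is to run the curvature flow starting from $\gamma_0$ and exploit the strict version of Corollary \ref{cor:2.4}. The key point is that $L[\gamma_0] = F(A[\gamma_0])$ forces the flow emanating from $\gamma_0$ to be stationary in length, which (via the equality discussion in Corollary \ref{cor:2.4}) pins down the geometry of $\gamma_0$.

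First I would reduce to the bilaterally symmetric case. Using the symmetrization trick from the proof of Theorem \ref{thrm:4.3}: given $\gamma_0$ with $L[\gamma_0] = F(A[\gamma_0])$, form $\gamma_{\alpha,1}$ and $\gamma_{\alpha,2}$ for an $\alpha$ chosen so that they have equal area (equal to $A[\gamma_0]$); one of them has length $\le L[\gamma_0]$, but since $L[\gamma_0]$ is already the minimum possible value $F(A[\gamma_0])$ and each symmetrized curve still satisfies the isoperimetric inequality, that symmetrized curve also achieves equality. Moreover, equality $L[\gamma_{\alpha,1}] = L[\gamma_0] = L[\gamma_{\alpha,2}]$ forces $\rho_0$ to already be bilaterally symmetric about $\alpha$ — otherwise the reflection-symmetrized curve would have strictly smaller length (this monotonicity-of-symmetrization step needs a short argument, comparing the length integrand contributions of the two half-curves). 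So we may assume $\gamma_0$ is bilaterally symmetric with some axis $\alpha$.

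Next, invoke Theorem \ref{thrm:5.1} to produce a smooth bilaterally symmetric solution $\gamma(t)$, $t > 0$, with $\rho_{\gamma(t)} \to \rho_0$ uniformly and $L[\gamma(t)] \to L[\gamma_0]$, $A[\gamma(t)] \to A[\gamma_0]$ as $t \searrow 0$. For $t > 0$ the flow is smooth and bilaterally symmetric, so Corollary \ref{cor:2.4} applies: $L[\gamma(t)]$ is nonincreasing. Combining with $\lim_{t\to\infty} L[\gamma(t)] = F(A[\gamma(t)]) = F(A[\gamma_0]) = L[\gamma_0]$ (convergence to a circle plus area conservation) and $\lim_{t\searrow 0} L[\gamma(t)] = L[\gamma_0]$, the nonincreasing function $L[\gamma(t)]$ is constant on $(0,\infty)$, hence $\tfrac{d}{dt}L[\gamma(t)] \equiv 0$. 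By the equality case of Corollary \ref{cor:2.4}, for each $t > 0$ either $r_s \equiv 0$ on $\gamma(t)$, or $\varphi'\varphi' - \varphi\varphi'' \equiv 1$ on $\gamma(t)$ with $r_s = a(t)\cos\theta + b(t)\sin\theta$. Using the hypothesis $\varphi'\varphi' - \varphi''\varphi \not\equiv 1$ together with the $C^0$ bound $\min r|_{\gamma_0} \le r|_{\gamma(t)} \le \max r|_{\gamma_0}$: if the second alternative held for some $t$, the curve $\gamma(t)$ would have to range over an interval of $r$-values on which $\beta \equiv 1$; since $\beta \not\equiv 1$ on $\gamma_0$ and the $r$-range of $\gamma(t)$ is nonempty, one argues (letting $t \searrow 0$, so that the $r$-range of $\gamma(t)$ exhausts that of $\gamma_0$) that this is impossible unless the $r$-range degenerates to a point, i.e. $r_s \equiv 0$. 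Hence $r_s \equiv 0$ on $\gamma(t)$ for all $t$ close to $0$, meaning $\gamma(t)$ is a slice $\{r(t)\} \times \S{1}$; letting $t \searrow 0$ and using uniform convergence $\rho_{\gamma(t)} \to \rho_0$, we conclude $\gamma_0$ is a slice $\{r\}\times\S{1}$.

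The main obstacle will be the step just described: carefully excluding the second alternative of Corollary \ref{cor:2.4} when $\beta \not\equiv 1$ on $\gamma_0$. The subtlety is that ``$\beta \equiv 1$ on $\gamma(t)$'' refers to $\beta$ evaluated along the moving curve, and one needs to control how the $r$-range of $\gamma(t)$ relates to that of $\gamma_0$ as $t \searrow 0$; the $C^0$ estimate gives an inclusion in one direction, and uniform convergence gives approximate equality of the ranges for small $t$. A secondary technical point is the reduction to the bilaterally symmetric case, where one must verify that strict inequality in the symmetrization estimate occurs precisely when $\rho_0$ fails to be symmetric about $\alpha$ — this is elementary but must be stated carefully to justify that equality propagates to a \emph{symmetric} competitor.
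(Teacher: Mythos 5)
Your overall strategy (symmetrize, run the flow via Theorem \ref{thrm:5.1}, use constancy of $L$ and the equality case of Corollary \ref{cor:2.4} to force $r_{s}\equiv 0$, then pass to $t\searrow 0$) is the same as the paper's, and your handling of the alternative $\varphi'\varphi'-\varphi\varphi''\equiv 1$ on $\gamma(t)$ via the $r$-range and uniform convergence is actually spelled out more carefully than in the paper, which silently absorbs that point. However, there is a genuine gap in your reduction step. You claim that $L[\gamma_{\alpha,1}]=L[\gamma_{0}]=L[\gamma_{\alpha,2}]$ forces $\rho_{0}$ to be bilaterally symmetric about $\alpha$, ``otherwise the reflection-symmetrized curve would have strictly smaller length.'' This is false as a principle: reflection symmetrization is not length-decreasing. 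Since $L[\gamma_{\alpha,1}]+L[\gamma_{\alpha,2}]=2L[\gamma_{0}]$, the two symmetrized curves always bracket $L[\gamma_{0}]$, and equality of all three lengths only says the two halves of $\gamma_{0}$ have equal length --- it does not make them reflections of one another. A curve whose two halves have equal length and equal area but are not mirror images would defeat your proposed argument.

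The repair is to drop the claim that $\gamma_{0}$ itself is symmetric and instead run your flow argument on \emph{each} of $\gamma_{\alpha,1}$ and $\gamma_{\alpha,2}$ separately: both achieve equality $L=F(A)$ (each is $\ge F(A[\gamma_{0}])=L[\gamma_{0}]$ and their lengths average to $L[\gamma_{0}]$), both are bilaterally symmetric by construction, so your argument shows each is a slice $\set{r_{i}}\cross\S{1}$. Since $\gamma_{\alpha,1}$ and $\gamma_{\alpha,2}$ each contain the points $(\rho_{0}(\pm\alpha),\pm\alpha)$ of $\gamma_{0}$, the two slices coincide, $r_{1}=r_{2}=r$, and since $\gamma_{0}$ is assembled from one half of each, $\gamma_{0}=\set{r}\cross\S{1}$. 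This is precisely what the paper does; with that substitution your proof is complete.
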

\begin{proof}
  First, using the ``cut and reflect'' technique in Theorem 4.3, we
  may cut $\gamma_{0}$ into two bilaterally symmetric halves
  $\gamma_{1}$ and $\gamma_{2}$ satisfying
  $A[\gamma_{1}]=A[\gamma_{2}]=A[\gamma_{0}]$. Clearly we have
  $L[\gamma_{1}]=F(A[\gamma_{1}])$. Now apply Theorem 5.1 to deduce a
  solution to the flow $\gamma_{1}(t)$. It is clear that
  $L[\gamma_{1}(t)]$ is a constant. By Corollary 2.4, we deduce that
  $r_{s}(t)\equiv 0$, and so the $\gamma_{1}(t)=\set{r}\cross \S{1}$
  (for all $t>0$). This is obviously stable as $t\to 0$ by uniform
  convergence, so $\gamma_{1}=\set{r}\cross\S{1}$. Clearly $\gamma_{2}=\set{r}\cross \S{1}$ for the \emph{same} $r$ (by continuity), so $\gamma_{0}=\set{r}\cross \S{1}$, as desired.
\end{proof}
Now we must consider the case when $\varphi'\varphi'-\varphi\varphi''=1$. First, we note the following result:
\begin{lem}\label{lem:5.3}
  If $N=(r_{1},r_{2})\cross \S{1}$ is a warped-product space satisfying $\varphi'\varphi'-\varphi\varphi''\equiv 1$, then there is $k>0$ and $r_{0}$ such that $\varphi(r)=\sinh(k(r-r_{0})),\sin(k(r-r_{0}))$ or $r-r_{0}$, and $N$ can be isometrically embedded into $k^{-1}\H{2}$, $k^{-1}\S{2}$ or $\R{2}$. (The gauss curvature is $-k^{2},k^{2},0$). 
\end{lem}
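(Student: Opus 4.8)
The plan is to start from the defining relation $\varphi'\varphi'-\varphi\varphi''\equiv 1$ and recognize it as an ODE for $\varphi$. Differentiating once gives $2\varphi'\varphi''-\varphi'\varphi''-\varphi\varphi'''=\varphi'\varphi''-\varphi\varphi'''=0$, so wherever $\varphi\neq 0$ (which is everywhere on $N$) we get $\varphi'\varphi''=\varphi\varphi'''$. This is not quite linear, so instead I would observe directly that the Gauss curvature of the warped product $dr^2+\varphi^2d\theta^2$ is $K=-\varphi''/\varphi$, and the hypothesis says $\varphi'^2-\varphi\varphi''=1$, i.e. $\varphi'^2+\varphi^2 K=1$. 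Differentiating $\varphi'^2-\varphi\varphi''=1$ in $r$ yields $\varphi'(\varphi''-\varphi''')\cdot$(after cancellation)$\,\varphi'(\varphi''-\varphi\varphi'''/\varphi')=0$; cleaner: from $\varphi'\varphi''=\varphi\varphi'''$ at points where $\varphi'\neq 0$ one divides to get $\varphi'''/\varphi''=\varphi'/\varphi$, hence $\log|\varphi''|=\log\varphi+c$, so $\varphi''=-k^2\varphi$ for a constant (the sign and the fact that the constant has a definite sign will be forced by the original relation together with $\varphi>0$). The case $\varphi''\equiv 0$ on an interval must be handled separately and, by the relation, forces $\varphi'^2\equiv 1$, hence $\varphi=\pm(r-r_0)$, and positivity picks the sign.

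Next I would integrate $\varphi''=-k^2\varphi$. If $k>0$ this gives $\varphi(r)=a\cos(kr)+b\sin(kr)$; plugging into $\varphi'^2-\varphi\varphi''=1$ gives $k^2(a^2+b^2)=1$, so after a phase shift $r\mapsto r-r_0$ we may write $\varphi(r)=k^{-1}\sin(k(r-r_0))$ — note $\varphi>0$ on $(r_1,r_2)$ is what lets us absorb the amplitude and phase into $k^{-1}\sin(k(r-r_0))$ rather than a general sinusoid. Strictly, the general solution of $\varphi''=-k^2\varphi$ with $k^2<0$ (i.e.\ $\varphi''=K_0^2\varphi$) is the hyperbolic case: $\varphi(r)=A e^{K_0 r}+Be^{-K_0 r}$, and the relation forces $-4K_0^2 AB=1$, so $AB<0$, whence (after translating $r$) $\varphi(r)=K_0^{-1}\sinh(K_0(r-r_0))$. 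So writing $k$ for the relevant positive constant in each case, $\varphi$ is one of $k^{-1}\sinh(k(r-r_0))$, $k^{-1}\sin(k(r-r_0))$, or $r-r_0$. I should double-check the claim in the statement is $\sinh(k(r-r_0))$ without the $k^{-1}$ — either the constant is being rescaled or the statement is normalizing differently; I would just present it with whatever normalization makes the Gauss curvature come out to $\mp k^2,0$ and note the discrepancy is only a rescaling of $k$.

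Finally, for the embedding: recall the standard fact that $k^{-1}\H{2}$ (hyperbolic plane of curvature $-k^2$) has, in geodesic polar coordinates about any point, the metric $dr^2+k^{-2}\sinh^2(kr)\,d\theta^2$; likewise $k^{-1}\S{2}$ has $dr^2+k^{-2}\sin^2(kr)\,d\theta^2$ and $\R{2}$ has $dr^2+r^2\,d\theta^2$. Since our $N$ has metric $dr^2+\varphi(r-r_0)^2d\theta^2$ with $\varphi$ one of these three profiles, the map that sends $(r,\theta)$ to the point at geodesic distance $r-r_0$ and polar angle $\theta$ in the model space is an isometry onto its image — one just checks the pullback of the model metric agrees termwise. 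One must be slightly careful that $(r_1,r_2)$ may not contain $r_0$, or may be a proper subinterval, so the image is an annular region (or a punctured disk) rather than all of the model; this is why the statement says ``isometrically embedded into'' rather than ``isometric to.'' I expect the main obstacle to be bookkeeping of the degenerate/boundary subcases — points where $\varphi'=0$ in the ODE-division step, and the flat case $\varphi''\equiv 0$ — and making sure the sign of the separation constant is pinned down correctly from $\varphi'^2-\varphi\varphi''=1$ and $\varphi>0$ rather than merely assumed; the rest is a routine identification with the model-space metrics in geodesic polar form.
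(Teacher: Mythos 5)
Your proof is correct and follows essentially the same route as the paper: differentiate the relation to see that the Gauss curvature $K=-\varphi''/\varphi$ is constant, solve the resulting linear ODE, use $\varphi'\varphi'-\varphi\varphi''=1$ to pin down the amplitude and a translation in $r$ to fix the phase, and identify the resulting metric with the geodesic-polar form of the constant-curvature model. The normalization question you raise is resolved in your favor: the relation forces $\varphi=k^{-1}\sinh(k(r-r_{0}))$, $k^{-1}\sin(k(r-r_{0}))$, or $r-r_{0}$ (the paper's proof asserts the amplitude $A=1$ where the computation actually gives $A=k^{-1}$), a harmless slip since only the constant-curvature identification is used for the embedding.
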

\begin{proof}
We use the well known fact that $k^{-1}\H{2}-\set{\text{origin}}$,
$k^{-1}\S{2}-\set{\pm (0,0,1)}$ and
$\R{2}-\set{(0,0)}$ are warped product surfaces with potentials $\sinh
kr$, $\sin kr$ and $r$, respectively. 

Note that $\varphi'\varphi'-\varphi''\varphi\equiv 1$
  implies that the gauss curvature $K=-\varphi''/\varphi$ is constant,
  and so 
  \begin{equation*}
    \varphi(r)=
    \begin{cases}
      A\sinh(k(r-r_{0})) \mspa K=-k^{2}\\
      A\sin(k(r-r_{0}))\mspa K=k^{2}\\
      Ar-r_{0}\mspa K=0
    \end{cases}.
  \end{equation*}
  Then invoking $\varphi'\varphi'-\varphi\varphi''=1$, we deduce that
  $A=1$ in all cases. Then the map
  $(r_{1},r_{2})\cross \S{1}\to (0,\infty)\cross \S{1}$ defined by
  $(r,\theta)\mapsto (r-r_{0},\theta)$ is an isometric embedding of
  the tube into $k^{-1}\mathbb{H}^{2}$, $k^{-1}\mathbb{S}^{2}$,
  or $\R{2}$, depending on the sign of $K$.
\end{proof}
Thanks to this result, if we are given $\gamma_{0}\subset N$ satisfying $\varphi'\varphi'-\varphi\varphi''\equiv 1$ on $\gamma_{0}$, then we may assume that $\gamma_{0}\subset k^{-1}\H{2},k^{-1}\S{2}$ or $\R{2}$, and then, up to rescaling, $\gamma_{0}\subset \H{2},\S{2}$ or $\R{2}$. If $\gamma_{0}$ is a radial graph in $N$, then we may assume it is a radial graph in $\H{2}$, $\S{2}$ or $\R{2}$. 

Referring to Corollary 2.4, if $\gamma(t)$ is a bilaterally symmetric solution to the curve flow (in one of these three space forms) satisfying $L[\gamma(t)]=\text{constant}$, then $r_{s}=a\cos\theta+b\sin\theta$ on $\gamma(t)$. We will show that this condition on the radial length function $r$ implies that $\gamma(t)$ is a circle (which may be translated). 

We first show that curves satisfying $r_{s}=a\cos\theta+b\sin\theta$ are unique up to initial point.
\begin{thrm}\label{thrm:5.4}
  Suppose that $\gamma_{1},\gamma_{2}\subset N$ are $C^{1}$ radial graphs in a
  warped-product space $N$ and $\gamma_{1}\cap \gamma_{2}\ne \emptyset$. Suppose that the unit tangent fields $\bd_{1,s}$ and $\bd_{2,s}$ satisfy $g^{N}(\bd_{\theta},\bd_{j,s})>0$, $j=1,2$.
  If $r_{s}=a\cos(\theta+\alpha)$ is true on both
  $\gamma_{1}$ and $\gamma_{2}$ for the same constants $a\in(-1,1)$ and $\alpha\in \R{}/2\pi\mathbb{Z}$, then $\gamma_{1}=\gamma_{2}$.  
\end{thrm}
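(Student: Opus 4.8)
The plan is to set up an ODE for the radial length function and invoke uniqueness of solutions to first-order ODEs. Fix the common point $p \in \gamma_1 \cap \gamma_2$, and write $p = (\rho_0, \theta_0)$. Along a $C^1$ radial graph with radial length function $\rho(\theta)$, the arc-length element satisfies $ds = \sqrt{\varphi(\rho)^2 + \rho_\theta^2}\,d\theta$, and $r_s = \rho_\theta / \sqrt{\varphi(\rho)^2 + \rho_\theta^2}$. The hypothesis $r_s = a\cos(\theta+\alpha)$ therefore translates into the relation
\begin{equation*}
  \frac{\rho_\theta}{\sqrt{\varphi(\rho)^2 + \rho_\theta^2}} = a\cos(\theta+\alpha).
\end{equation*}
First I would solve this algebraically for $\rho_\theta$: squaring gives $\rho_\theta^2(1 - a^2\cos^2(\theta+\alpha)) = a^2\cos^2(\theta+\alpha)\,\varphi(\rho)^2$, and since $a \in (-1,1)$ the coefficient $1 - a^2\cos^2(\theta+\alpha)$ is strictly positive, so
\begin{equation*}
  \rho_\theta = \frac{a\cos(\theta+\alpha)}{\sqrt{1 - a^2\cos^2(\theta+\alpha)}}\,\varphi(\rho) =: G(\theta)\,\varphi(\rho),
\end{equation*}
where the sign is determined (not lost in squaring) because $\rho_\theta$ and $a\cos(\theta+\alpha)$ must have the same sign, $\varphi > 0$. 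This is a first-order ODE $\rho_\theta = G(\theta)\varphi(\rho)$ whose right-hand side is $C^1$ in $\rho$ (since $\varphi$ is smooth and positive) and continuous in $\theta$; both $\rho_1$ and $\rho_2$ are solutions with $\rho_1(\theta_0) = \rho_2(\theta_0) = \rho_0$.

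Next I would apply the Picard–Lindelöf uniqueness theorem: since $\varphi$ is locally Lipschitz in $\rho$ on the relevant compact range of $r$-values, the initial value problem has a unique solution, so $\rho_1 \equiv \rho_2$ on a maximal interval. The only subtlety is that $\rho_1, \rho_2$ are a priori only defined on all of $\S{1}$ and are $C^1$, so I would run a standard continuation/connectedness argument: the set $\{\theta : \rho_1(\theta) = \rho_2(\theta)\}$ is closed (by continuity), nonempty (contains $\theta_0$), and open (by local uniqueness applied at any agreement point, using that both satisfy the same ODE there), hence all of $\S{1}$. Therefore $\gamma_1 = \gamma_2$.

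The main obstacle — really the only place care is needed — is the sign bookkeeping when passing from the implicit relation to the explicit ODE $\rho_\theta = G(\theta)\varphi(\rho)$: one must confirm that the orientation hypothesis $g^N(\bd_\theta, \bd_{j,s}) > 0$ forces $\rho_\theta$ and $a\cos(\theta+\alpha)$ to carry the same sign, so that the square root is resolved consistently and $G(\theta)$ is a single well-defined continuous (indeed smooth, since $|a| < 1$) function of $\theta$ independent of $j$. Once that is pinned down, the uniqueness argument is entirely routine. I would also remark that the hypothesis $a \in (-1,1)$ is exactly what is needed to keep the denominator $\sqrt{1 - a^2\cos^2(\theta+\alpha)}$ bounded away from zero, so that $G$ is bounded and the ODE theory applies globally on $\S{1}$; the borderline case $|a| = 1$ (where $r_s \to \pm 1$, i.e. the curve becomes tangent to the radial direction) is genuinely excluded and corresponds to degenerate configurations.
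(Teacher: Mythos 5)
Your proof is correct and follows essentially the same route as the paper: reduce the hypothesis to a first-order ODE with locally Lipschitz right-hand side passing through the common point and invoke uniqueness, using the orientation hypothesis $g^{N}(\partial_{\theta},\partial_{j,s})>0$ to resolve the sign of the square root and $a\in(-1,1)$ to keep the equation nondegenerate. The only cosmetic difference is that you parametrize by $\theta$ and obtain the single scalar equation $\rho_{\theta}=G(\theta)\varphi(\rho)$, whereas the paper parametrizes by arc length and obtains the equivalent planar system for $(r_{j},\theta_{j})$.
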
 
\begin{proof}
  Let $p\in \gamma_{1}\cap\gamma_{2}$ and let $F_{j}:\R{}\to \gamma_{j}$ be the unique (arc-length)
  parametrization satisfying
  \begin{equation*}
    F_{j}(0)=p\spa F_{j}'(s)=\bd_{j,s},
  \end{equation*}
  where $\bd_{j,s}$ is the unit tangent vector to $\gamma_{j}$. Let
  $F_{j}=(r_{j},\theta_{j})$, where $\theta_{j}:\R{}\to
  \R{}/2\pi\mathbb{Z}$. Using the orthogonal decomposition
  \begin{equation*}
    \bd_{j,s}=g^{N}(\bd_{r},\bd_{j,s})\bd_{r}+g^{N}(\frac{1}{\varphi}\bd_{\theta},\bd_{j,s})\frac{\bd_{\theta}}{\varphi}=r_{j}'(s)\bd_{r}+\varphi\theta_{j}'(s)
    \frac{\bd_{\theta}}{\varphi},
  \end{equation*}
  we deduce that
  \begin{equation*}
    \theta_{j}'(s)^{2}=\frac{1-r_{j}'(s)^{2}}{\varphi^{2}(r_{j}(s))},
  \end{equation*}
  and thus
  \begin{equation}\label{eq:5.2}
    r_{j}(0)=r(p)\mspa\theta_{j}(0)=\theta(p)\mspa
    r_{j}'=a\cos(\theta_{j}+\alpha)\mspa
    \theta_{j}'=\frac{\sqrt{1-a^{2}\cos^{2}(\theta_{j}+\alpha)}}{\varphi(r_{j})}.
  \end{equation}
  We are allowed to choose the positive square root since $g^{N}(\bd_{\theta},\bd_{j,s})>0$ implies that $\theta_{j}'>0$.

  Since solutions to \eqref{eq:5.2} are unique, we conclude that
  $F_{1}=F_{2}$, and thus $\gamma_{1}=\gamma_{2}$, as desired.
\end{proof}
To prove that the only graphs satisfying $r_{s}=a\cos\theta$ are
circles, we find it convenient to split the argument into three
sections depending on the ambient space $\R{2}$, $\S{2}$ or
$\H{2}$. %The arguments are sufficiently similar, so we only prove the
         %$\R{2}$ case. 
\begin{lem}
  Given any $a\in (-1,1)$ and $\alpha\in \R{}/2\pi\mathbb{Z}$, the circle
  \begin{equation}\label{eq:3}
    C(a,\alpha,R):=\im(\beta\mapsto(aR\sin \alpha+R\cos\beta,aR\cos\alpha+R\sin\beta))
  \end{equation}
  satisfies $r_{s}=a\cos(\theta+\alpha)$.
\end{lem}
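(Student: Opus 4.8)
The plan is to verify directly that the explicitly parametrized circle $C(a,\alpha,R)$ in $\R{2}$ satisfies $r_s = a\cos(\theta+\alpha)$, by computing $r$ and $\theta$ along the curve and differentiating with respect to arc length. The curve is given in warped-product coordinates $(r,\theta)$ — here $r$ is the Euclidean distance from the origin and $\theta$ the polar angle — but the parametrization \eqref{eq:3} is written as if in Cartesian coordinates, so the first step is to reconcile this: the point $(aR\sin\alpha + R\cos\beta,\ aR\cos\alpha + R\sin\beta)$ should be read as a Cartesian point, and I would compute $r^2 = (aR\sin\alpha + R\cos\beta)^2 + (aR\cos\alpha + R\sin\beta)^2$ and expand. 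This gives $r^2 = R^2(1 + a^2 + 2a\sin(\alpha+\beta)) = R^2(1 + a^2 + 2a\cos(\theta+\alpha))$ once one identifies the geometric angle correctly; the cross term is the crucial one.

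Next I would differentiate. Since $\Phi(r) = r^2/2$ in $\R{2}$ (as $\varphi(r) = r$), it is cleanest to use the corollary's machinery or simply work with $\Phi_s = \varphi b = r\, r_s$. Differentiating $r^2 = R^2(1+a^2) + 2aR^2\cos(\theta+\alpha)$ along the curve gives $2r\, r_s = -2aR^2\sin(\theta+\alpha)\,\theta_s$, i.e. $r\, r_s = -aR^2\sin(\theta+\alpha)\,\theta_s$. The remaining task is to pin down $\theta_s$. I would use the arc-length relation $r_s^2 + \varphi^2\theta_s^2 = 1$, i.e. $r_s^2 + r^2\theta_s^2 = 1$, together with the target identity, to check consistency: if $r_s = a\cos(\theta+\alpha)$ then $r^2\theta_s^2 = 1 - a^2\cos^2(\theta+\alpha)$, and one must verify this is compatible with the expression $r\, r_s = -aR^2\sin(\theta+\alpha)\,\theta_s$ obtained above. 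Squaring the latter gives $r^2 r_s^2 = a^2 R^4 \sin^2(\theta+\alpha)\,\theta_s^2$, and substituting the candidate values for $r^2$, $r_s$, and $r^2\theta_s^2$ should reduce to an algebraic identity in $a$, $R$, $\theta+\alpha$ — this is the computational heart of the proof.

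The main obstacle I anticipate is the coordinate bookkeeping: making sure that the angle appearing in \eqref{eq:3} (the parameter-shifted $\beta$, or the combination $\alpha + \beta$) is correctly matched to the warped-product angular coordinate $\theta$, and that signs and orientation conventions ($g^N(\bd_\theta, \bd_s) > 0$, $g(\bd_r,\bd_\nu) > 0$) are respected so that one takes the correct square root for $\theta_s$. Once the relation $r^2 = R^2(1+a^2+2a\cos(\theta+\alpha))$ is established with the right angle, the differentiation is mechanical. A cleaner alternative, which I would present if the direct computation gets messy, is to recognize $C(a,\alpha,R)$ as the image under a Euclidean translation (by the vector $(aR\sin\alpha, aR\cos\alpha)$) of the centered circle of radius $R$, for which $r_s \equiv 0$ trivially, and then track how $r_s$ transforms under translation — but since translations do not act simply on the $(r,\theta)$ coordinates, the honest computation above is probably the most transparent route. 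In any case, I would conclude by noting that $a \in (-1,1)$ guarantees $1 - a^2\cos^2(\theta+\alpha) > 0$ so that $\theta_s$ never vanishes and the curve is genuinely a radial graph with the stated orientation, completing the verification.
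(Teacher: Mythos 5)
Your overall strategy --- get at $r\,r_{s}$ by differentiating $r^{2}$ along the curve --- is the right one, and is essentially what the paper does; but the step you yourself flag as ``crucial'' is wrong, and everything downstream inherits the error. The parameter computation $r^{2}=R^{2}(1+a^{2}+2a\sin(\alpha+\beta))$ is correct, but the identification $\sin(\alpha+\beta)=\cos(\theta+\alpha)$ is false. With $x=r\cos\theta$, $y=r\sin\theta$ one finds
\begin{equation*}
  r\sin(\theta+\alpha)=y\cos\alpha+x\sin\alpha=aR+R\sin(\alpha+\beta),
  \qquad
  r\cos(\theta+\alpha)=x\cos\alpha-y\sin\alpha=R\cos(\alpha+\beta),
\end{equation*}
so the correct polar equation of the translated circle is the law-of-cosines relation $r^{2}=R^{2}(1-a^{2})+2aRr\sin(\theta+\alpha)$: a sine rather than a cosine, and with an extra factor of $r$ in the cross term. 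Your claimed equation $r^{2}=R^{2}(1+a^{2})+2aR^{2}\cos(\theta+\alpha)$ therefore does not hold (test it at $\alpha=0$, $\beta=0$, where the point is $(R,aR)$ and $r^{2}=R^{2}(1+a^{2})$ exactly, with no cross term), and neither does the derived identity $r\,r_{s}=-aR^{2}\sin(\theta+\alpha)\,\theta_{s}$. A second, independent defect is that your endgame is a ``consistency check'' after squaring; even with correct formulas this would at best establish $r_{s}^{2}=a^{2}\cos^{2}(\theta+\alpha)$ and cannot determine the sign of $r_{s}$, so it does not prove the lemma.

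The repair is to never convert to a polar equation at all and to stay in the parameter $\beta$. With arc length $s=R\beta$ one has $x_{s}=-\sin\beta$, $y_{s}=\cos\beta$, hence
\begin{equation*}
  r\,r_{s}=xx_{s}+yy_{s}=aR\bigl(\cos\alpha\cos\beta-\sin\alpha\sin\beta\bigr)=aR\cos(\alpha+\beta)=a\,r\cos(\theta+\alpha),
\end{equation*}
using the second displayed identity above, and dividing by $r$ finishes the proof. This is exactly the paper's argument: it first rotates to reduce to $\alpha=0$, rewrites the claim as $xx_{s}+yy_{s}=ax$, and verifies it directly from the parametrization in two lines, with no polar equation for the off-centre circle needed.
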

\begin{proof}
  By rotational symmetry of $\R{2}$ it suffices to prove the case
  $\alpha=0$. Then we note that
  \begin{equation*}
    r_{s}=a\cos(\theta)\iff rr_{s}=ax\iff xx_{s}+yy_{s}=ax,
  \end{equation*}
  plugging in $x(\beta),y(\beta)$, we obtain
  \begin{equation*}    xx_{s}+yy_{s}=-R\cos\beta\sin\beta+aR\cos\beta+R\cos\beta\sin\beta=aR\cos\beta=ax,
  \end{equation*}
  and so indeed $r_{s}=a\cos(\theta)$ on the circle
  $\im(\beta\mapsto(R\cos\beta,aR+R\sin\beta)),$
  which completes the proof.
\end{proof}
\begin{thrm}\label{thrm:5.6}
    If $\gamma\subset \R{2}$ is a $C^{1}$ radial graph which satisfies $r_{s}=a\cos(\theta+\alpha),$ then $\gamma$ is a circle.
  \end{thrm}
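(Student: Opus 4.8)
The plan is to combine the model-circle lemma just stated with the uniqueness result of Theorem \ref{thrm:5.4}: I will show that through every point of $\gamma$ there passes a circle of the form $C(a,\alpha,R)$ with the \emph{same} pair $(a,\alpha)$, and then invoke uniqueness to identify $\gamma$ with that circle. First I would record that $a\in(-1,1)$. Indeed, since $\gamma$ is a radial graph we have $g^{N}(\bd_{\theta},\bd_{s})>0$ everywhere, so in the decomposition $\bd_{r}=a\nu+b\bd_{s}$ the scalar (call it $a_{0}$, not to be confused with the constant $a$ in the statement) is nowhere zero; because $r_{s}=b=a\cos(\theta+\alpha)$ and $a_{0}^{2}+b^{2}=1$, the function $r_{s}$ never reaches $\pm 1$, which forces $\abs{a}<1$.

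Next, fix any point $p\in\gamma$ and write $p=(p_{1},p_{2})$ in the Cartesian coordinates of $\R{2}$. Since $\gamma\subset N=(0,R)\cross\S{1}$ does not meet the origin, $\abs{p}>0$. From \eqref{eq:3} the curve $C(a,\alpha,R')$ is exactly the circle of radius $R'$ centered at $(aR'\sin\alpha,\,aR'\cos\alpha)$, so $p\in C(a,\alpha,R')$ if and only if $\abs{p-(aR'\sin\alpha,aR'\cos\alpha)}^{2}=R'^{2}$, which rearranges to the quadratic
\begin{equation*}
  (1-a^{2})R'^{2}+2aR'(p_{1}\sin\alpha+p_{2}\cos\alpha)-\abs{p}^{2}=0
\end{equation*}
in $R'$. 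Its leading coefficient $1-a^{2}$ is positive and its constant term $-\abs{p}^{2}$ is negative, so it has exactly one positive root $R_{*}$; take $R'=R_{*}$.

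Finally, I would check that the model circle $C(a,\alpha,R_{*})$ is itself a $C^{1}$ radial graph satisfying the hypotheses of Theorem \ref{thrm:5.4}: its center lies at distance $\abs{a}R_{*}<R_{*}$ from the origin, so the origin is interior to it and it is a radial graph; one verifies that the parametrization $\beta\mapsto(aR_{*}\sin\alpha+R_{*}\cos\beta,\,aR_{*}\cos\alpha+R_{*}\sin\beta)$ traverses it in the orientation with $g^{N}(\bd_{\theta},\bd_{s})>0$, and the preceding lemma gives $r_{s}=a\cos(\theta+\alpha)$ on it. Now $\gamma$ and $C(a,\alpha,R_{*})$ are two $C^{1}$ radial graphs in $\R{2}$ that share the point $p$ and both satisfy $r_{s}=a\cos(\theta+\alpha)$ with the same $a\in(-1,1)$ and $\alpha$, so Theorem \ref{thrm:5.4} yields $\gamma=C(a,\alpha,R_{*})$, a circle. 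The only steps requiring genuine care are the a priori bound $\abs{a}<1$ (ruling out the degenerate boundary case) and confirming the orientation convention for the model circle so that Theorem \ref{thrm:5.4} is applicable; the rest is the one-line quadratic computation above.
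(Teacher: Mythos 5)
Your proposal is correct and follows the same overall strategy as the paper: bound $\abs{a}<1$ using the radial-graph condition, produce a model circle $C(a,\alpha,R)$ through a point of $\gamma$, and conclude from the uniqueness result Theorem \ref{thrm:5.4}. The one step you carry out differently is how the model circle is located. The paper takes the point $p\in\gamma$ where $r$ is maximized, observes that $r_{s}(p)=0$ forces $\theta(p)+\alpha\in \pi/2+\pi\mathbb{Z}$ so that $p$ lies on the line $\mathbb{R}(\sin\alpha,\cos\alpha)$, and then uses surjectivity of $R\mapsto c_{\pm}(a,\alpha,R)$ onto the two rays of that line to find the matching $R$. You instead fit a circle through an \emph{arbitrary} point of $\gamma$ by solving the quadratic $(1-a^{2})R'^{2}+2aR'(p_{1}\sin\alpha+p_{2}\cos\alpha)-\abs{p}^{2}=0$, whose positive leading coefficient and negative constant term give exactly one positive root. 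Your version is a bit more direct and avoids singling out a special point; the paper's version is the one whose template carries over to the $\S{2}$ and $\H{2}$ cases (Theorem \ref{thrm:5.8}), where the analogous surjectivity argument for $g_{1},g_{2}$ replaces the quadratic. You are also right to flag the orientation check on the model circle (so that Theorem \ref{thrm:5.4} applies) as the point requiring care; the paper's proof passes over it silently.
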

  \begin{proof}
    First note that if $\abs{a}\ge 1$, then $\abs{r_{s}(\theta=-\alpha)}\ge 1$ which contradicts the fact that $r_{\nu}>0$, since $r_{s}^{2}+r_{\nu}^{2}=1$. Thus $a\in (-1,1)$. This argument uses the fact that $\gamma$ is a radial graph to deduce that $r_{\nu}>0$ and that there exists some point on $\gamma$ with $\theta=-\alpha$.

    Consider the circles $C(a,\alpha,R)$ defined in the previous theorem. It is straightforward to see that the two points
    \begin{equation*}
      \begin{aligned}
      c_{+}(a,\alpha,R):=((aR+R)\sin \alpha,(aR+R)\cos\alpha)\\ c_{-}(a,\alpha,R):=((aR-R)\sin \alpha,(aR-R)\cos\alpha)        
      \end{aligned}
    \end{equation*}
    both lie on $C(a,\alpha,R)$. Since $\abs{a}<1$, $R\mapsto c_{+}(R)$ is surjective onto the ray $\R{}_{+}(\sin\alpha,\cos\alpha)$ and $R\mapsto c_{-}(R)$ is surjective onto the ray $\R{}_{-}(\sin\alpha,\cos\alpha)$.

    Let $p$ be the point on $\gamma$ where the radius function
  $r$ is maximized. Then $r_{s}(p)=0$, so
  $\theta(p)+\alpha=\pi/2+\pi\mathbb{Z}$. It follows that $\gamma$ intersects
  the line $\R{}(\sin(\alpha),\cos(\alpha))$, and so there is some $R$ such that either $p=c_{+}(a,\alpha,R)$ or $p=c_{-}(a,\alpha,R)$. Theorem \ref{thrm:5.4} implies that $\gamma=C(a,\alpha,R)$.
  \end{proof}

Turning now to the $\S{2}$ case, we consider $\S{2}\subset \R{3}$ with
$x=\sin r \cos\theta$, $y=\sin r\sin\theta$ and $z=\cos r$. 
\begin{thrm}\label{thrm:5.7}
  The circle $C_{R}(p)\subset \S{2}-\set{(0,0,\pm 1)}$, $R\ne 0$, satisfies 
$$r_{s}=\frac{y(p)}{\sin R}\cos\theta-\frac{x(p)}{\sin R}\sin \theta$$
\end{thrm}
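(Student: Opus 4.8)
The plan is to reduce the $\S{2}$ statement to a direct computation analogous to the $\R{2}$ case (the lemma preceding Theorem~\ref{thrm:5.6}), exploiting the rotational symmetry of $\S{2}$ about the $z$-axis to first normalize the center. Write $C_{R}(p)$ as the geodesic circle of radius $R$ centered at $p=(x(p),y(p),z(p))\in\S{2}-\set{(0,0,\pm 1)}$; concretely, $C_{R}(p)=\set{q\in\S{2}:\ip{q,p}=\cos R}$. Using the azimuthal rotation $\theta\mapsto\theta+c$ (an isometry of $\S{2}$ fixing the poles, hence preserving $r$ and acting by rotation on $\theta$), I can rotate $p$ into the half-plane $y(p)=0$, $x(p)>0$; the claimed formula $r_{s}=\frac{y(p)}{\sin R}\cos\theta-\frac{x(p)}{\sin R}\sin\theta$ transforms correctly under this rotation (it is the restriction to $C_R(p)$ of a fixed linear-in-$(\cos\theta,\sin\theta)$ function, and rotating $\theta$ rotates $(x(p),y(p))$ the same way), so it suffices to treat $p=(\sin r_{0},0,\cos r_{0})$ for some $r_{0}\in(0,\pi)$, where the formula reads $r_{s}=-\frac{\sin r_{0}}{\sin R}\sin\theta$.

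Next I would parametrize $C_{R}(p)$ explicitly. In the frame where $p$ lies in the $xz$-plane, a geodesic circle of radius $R$ about $p$ is $q(\beta)=\cos R\,p+\sin R\,(\cos\beta\, e_{1}+\sin\beta\, e_{2})$ where $e_{2}=(0,1,0)$ and $e_{1}=(\cos r_{0},0,-\sin r_{0})$ complete an orthonormal frame at $p$. This gives $x(\beta)=\cos R\sin r_{0}+\sin R\cos\beta\cos r_{0}$, $y(\beta)=\sin R\sin\beta$, $z(\beta)=\cos R\cos r_{0}-\sin R\cos\beta\sin r_{0}$, from which $\cos r=z$, $\tan\theta=y/x$. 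Then I compute $r_{s}=g(\bd_{r},\bd_{s})$. The cleanest route is the identity $r_{s}=\ud{r}{s}$ along the curve: since $\cos r=z(\beta)$ we get $-\sin r\,\ud{r}{\beta}=\ud{z}{\beta}=\sin R\sin\beta\sin r_{0}$, so $\ud{r}{\beta}=-\frac{\sin R\sin\beta\sin r_{0}}{\sin r}$; and the arc-length element satisfies $\ud{s}{\beta}=\abs{\bd_{\theta}}\,\ud{\theta}{\beta}/(\ldots)$ — more directly, $\braces{\ud{s}{\beta}}^{2}=\braces{\ud{r}{\beta}}^{2}+\varphi(r)^{2}\braces{\ud{\theta}{\beta}}^{2}$ with $\varphi(r)=\sin r$. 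Alternatively, and more in the spirit of the $\R{2}$ lemma, I would verify the equivalent algebraic identity: $r_{s}=-\frac{\sin r_{0}}{\sin R}\sin\theta$ is equivalent, after multiplying by $\sin r=\varphi(r)$ and using $\varphi(r)\sin\theta=y$, $\varphi(r)r_{s}=\Phi_{s}=\ud{\Phi}{s}$ with $\Phi(r)=-\cos r$ (since $\Phi'=\varphi=\sin r$), to $\ud{(-\cos r)}{s}=-\frac{\sin r_{0}}{\sin R}y$, i.e. $\ud{z}{s}=-\frac{\sin r_{0}}{\sin R}y$; and this I check by computing both sides against $\beta$: $\ud{z}{\beta}=\sin R\sin\beta\sin r_{0}$ while $-\frac{\sin r_{0}}{\sin R}y\,\ud{s}{\beta}$ must match — so the real content is that $\ud{s}{\beta}=-\sin R\sin\beta/(\ldots)$, i.e. I still need the arc-length factor. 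Cleanest of all: observe $z_{s}=\ip{\ud{q}{s},e_{3}}$ where $e_{3}=(0,0,1)$, and since $\bd_{s}$ is the unit tangent to $C_R(p)$, which lies in the plane $\ip{q,p}=\cos R$, the tangent direction is proportional to $p\times q$; hence $z_{s}=\ip{p\times q,e_{3}}/\abs{p\times q}=\ip{q, e_3\times p}/\sin R$ (using $\abs{p\times q}=\sin R$ since the angle between $p,q$ is $R$), and $e_{3}\times p=(-y(p),x(p),0)$, giving $z_{s}=(-y(p)x(q)+x(p)y(q))/\sin R = \frac{-y(p)\varphi(r)\cos\theta+x(p)\varphi(r)\sin\theta}{\sin R}$. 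Dividing by $-\varphi(r)=-\sin r$ and using $z_s = -\sin r \, r_s$ gives exactly $r_{s}=\frac{y(p)}{\sin R}\cos\theta-\frac{x(p)}{\sin R}\sin\theta$.

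So the steps in order are: (1) record $C_{R}(p)=\set{q:\ip{q,p}=\cos R}$ and that $\bd_{s}\parallel p\times q$ with $\abs{p\times q}=\sin R$; (2) note $z = \cos r$, $x=\sin r\cos\theta$, $y=\sin r\sin\theta$, hence $z_{s}=-\sin r\, r_{s}$; (3) compute $z_{s}=\ip{\ud{q}{s},e_{3}}=\ip{p\times q,e_{3}}/\abs{p\times q}=\ip{q,e_{3}\times p}/\sin R$; (4) expand $e_{3}\times p=(-y(p),x(p),0)$ and substitute $x(q)=\sin r\cos\theta$, $y(q)=\sin r\sin\theta$; (5) divide through by $-\sin r$ to obtain the claimed formula, noting $\sin r\ne 0$ on $\S{2}-\set{(0,0,\pm1)}$. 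The only subtlety — and the one place to be careful — is the sign/orientation: the problem fixes the orientation by $g^{N}(\bd_{\theta},\bd_{s})>0$, so I must check that $p\times q$ (as opposed to $q\times p$) is the correctly oriented tangent, which amounts to checking the sign of $\ip{p\times q,\bd_{\theta}}$ at one convenient point of $C_{R}(p)$; this is a one-line verification but it is where an error would most easily creep in. There is no analytic difficulty here: once the orientation is pinned down, the whole statement is an identity in $\R{3}$ vector algebra, exactly parallel to the $xx_{s}+yy_{s}=ax$ computation in the $\R{2}$ lemma.
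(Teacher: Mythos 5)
Your proposal is correct and follows essentially the same route as the paper: reduce the claim to the identity $z_{s}=(x(p)y-y(p)x)/\sin R$ using $z=\cos r$, $x=\sin r\cos\theta$, $y=\sin r\sin\theta$, and derive it from $\bd_{s}\sin R=p\cross q$. Your extra attention to the orientation (that the convention $g^{N}(\bd_{\theta},\bd_{s})>0$ selects $p\cross q$ rather than $q\cross p$) is a worthwhile check that the paper leaves implicit.
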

\begin{proof}
  It suffices to prove that
  \begin{equation}\label{eq:5.4}
    \sin r\, r_{s}=\frac{y(p)}{\sin R}\sin r\cos\theta-\frac{x(p)}{\sin R}\sin r\sin\theta \iff z_{s}=\frac{x(p)y-y(p)x}{\sin R},
  \end{equation}
  where we have used $x=\sin r \cos\theta$, $y=\sin r\sin\theta,$ and $z=\cos r$.

  To prove this, we note that, at a point $q$ on the circle $C_{R}(p)$, the unit tangent vector $\bd_{s}(q)$ satisfies 
  \begin{equation*}
    \bd_{s}(q) \sin R= p\cross q \mspa\text{(vector cross product),}
  \end{equation*}
  which implies that
  \begin{equation*}
    z_{s}\sin R=\det
    \begin{pmatrix}
      x(p)&y(p)\\
      x(q)&y(q)
    \end{pmatrix},
  \end{equation*}
  which is equivalent to \eqref{eq:5.4}.
\end{proof}
\begin{thrm}\label{thrm:5.8}
  If $\gamma\subset \S{2}-\set{(0,0,\pm 1)}$ is a radial graph which satisfies $r_{s}=a\cos\theta+b\sin\theta$, then $\gamma$ is a circle.
\end{thrm}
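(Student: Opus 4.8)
The plan is to imitate the proof of Theorem \ref{thrm:5.6}, using the circles of Theorem \ref{thrm:5.7} in place of the planar circles $C(a,\alpha,R)$. Write $A=\sqrt{a^{2}+b^{2}}$. Since $\gamma$ is a radial graph the angular coordinate $\theta$ ranges over all of $\S{1}$, so $r_{s}$ attains the value $A$; but $r_{s}^{2}+r_{\nu}^{2}=1$ and $r_{\nu}>0$ on a radial graph, which forces $A<1$. If $A=0$ then $r_{s}\equiv 0$, the radial function is constant, and $\gamma=\set{r_{0}}\cross\S{1}=C_{r_{0}}((0,0,1))$ is a circle; so assume from now on that $A\in(0,1)$. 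Let $p_{0}\in\gamma$ maximize the radial coordinate $r$; then $r_{s}(p_{0})=0$ and $r_{0}:=r(p_{0})\in(0,\pi)$ (as $\gamma$ avoids the poles). Writing $\theta_{0}=\theta(p_{0})$, the relation $a\cos\theta_{0}+b\sin\theta_{0}=0$ says $(\cos\theta_{0},\sin\theta_{0})=\sigma A^{-1}(-b,a)$ for some $\sigma\in\set{\pm 1}$, so in $\R{3}$ one has $p_{0}=(-\sigma A^{-1}b\sin r_{0},\ \sigma A^{-1}a\sin r_{0},\ \cos r_{0})$.

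Next I would produce a circle $C_{R}(p)$ through $p_{0}$ carrying the same $r_{s}$. By Theorem \ref{thrm:5.7}, $C_{R}(p)$ satisfies $r_{s}=a\cos\theta+b\sin\theta$ precisely when $x(p)=-b\sin R$ and $y(p)=a\sin R$; for such a centre $z(p)=\tau\sqrt{1-A^{2}\sin^{2}R}$ with $\tau\in\set{\pm 1}$, which is real and nonzero for every $R$ because $A<1$. Then $p_{0}\in C_{R}(p)$, i.e.\ $\ip{p_{0},p}=\cos R$, reduces (using the coordinates of $p_{0}$) to
\begin{equation*}
  \sigma A\sin r_{0}\,\sin R+\tau\cos r_{0}\,\sqrt{1-A^{2}\sin^{2}R}=\cos R.
\end{equation*}
Setting $t=\cos R\in(-1,1)$ and $\sin R=\sqrt{1-t^{2}}$, the difference of the two sides is a continuous function of $t\in[-1,1]$ with value $\tau\cos r_{0}-1<0$ at $t=1$ and $\tau\cos r_{0}+1>0$ at $t=-1$ (both strict since $\abs{\cos r_{0}}<1$). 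By the intermediate value theorem there is $R^{*}\in(0,\pi)$ solving the equation; fixing $\tau=+1$ and letting $p^{*}$ be the resulting centre, the circle $C_{R^{*}}(p^{*})$ — which meets neither pole, since $z(p^{*})^{2}=1-A^{2}\sin^{2}R^{*}\ne\cos^{2}R^{*}$ — passes through $p_{0}$ and, by Theorem \ref{thrm:5.7}, satisfies $r_{s}=a\cos\theta+b\sin\theta$.

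To conclude I would apply Theorem \ref{thrm:5.4} to $\gamma$ and $C_{R^{*}}(p^{*})$, which intersect at $p_{0}$. This requires knowing that $C_{R^{*}}(p^{*})$ is a $C^{1}$ radial graph carrying the orientation $g^{N}(\bd_{\theta},\bd_{s})>0$. Along $C_{R^{*}}(p^{*})$ one has $\abs{r_{s}}\le A<1$, so in arc-length $\varphi(r)^{2}(\theta')^{2}=1-r_{s}^{2}>0$ and the angular coordinate has nowhere-vanishing derivative; hence $\theta\colon C_{R^{*}}(p^{*})\to\S{1}$ is a submersion of nonzero degree, and since the image of a round circle under $(x,y,z)\mapsto(x,y)$ is a nondegenerate ellipse this degree is $\pm 1$ --- so $C_{R^{*}}(p^{*})$ is a radial graph. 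That its natural orientation (the one built into Theorem \ref{thrm:5.7}) is the radial-graph orientation follows by continuity: the radial-graph circles $C_{R}(p)$ with $z(p)>0$ form a connected family containing the slices $\set{r}\cross\S{1}=C_{r}((0,0,1))$, on which one checks directly that $\bd_{s}=(\sin R)^{-1}(p\cross q)$ is a positive multiple of $\bd_{\theta}$. Theorem \ref{thrm:5.4} --- applied with $\alpha$ chosen so that $a\cos\theta+b\sin\theta=A\cos(\theta+\alpha)$, noting $A\in(-1,1)$ --- then gives $\gamma=C_{R^{*}}(p^{*})$, a circle.

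The step I expect to be the main obstacle is the orientation and radial-graph bookkeeping in the last paragraph: Theorem \ref{thrm:5.4} compares radial graphs whose functions $r_{s}$ agree on the nose (not merely up to sign or reparametrization), so one must genuinely verify that $C_{R^{*}}(p^{*})$ is a radial graph and that its orientation matches, and must keep the sign choices $\sigma$, $\tau$ and the choice of $\theta(p_{0})$ mutually consistent throughout. By comparison, the intermediate-value step producing $R^{*}$ and the bound $A<1$ are routine.
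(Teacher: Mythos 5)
Your proposal is correct and follows the paper's strategy: exhibit a circle from the family of Theorem \ref{thrm:5.7} carrying the same $r_{s}$ and passing through the point of $\gamma$ where $r$ is maximal, then invoke the uniqueness statement of Theorem \ref{thrm:5.4}. The only differences are cosmetic or supplementary: where the paper rotates to reduce to $r_{s}=b\sin\theta$ and finds the common point via the monotone surjective functions $g_{1}(R)=f(R)+R$, $g_{2}(R)=f(R)-R$ on the meridian, you solve the incidence equation $\ip{p_{0},p(R)}=\cos R$ directly by the intermediate value theorem, and you additionally verify the radial-graph and orientation hypotheses of Theorem \ref{thrm:5.4} for the comparison circle, a point the paper leaves implicit.
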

\begin{proof}
  For simplicity, rotate $\gamma$ so that $r_{s}=b\sin\theta$, with
  $b>0$. As in the proof of Theorem \ref{thrm:5.6} We may assume that $b<1$. There is unique smooth $f:(0,\pi)\to(0,\pi)$ such that $b\sin R=\sin f(R)$; clearly $f(R)<R$. Now consider the circle $C_{R}(p)$ where $p=(\sin f(R),0,\cos f(R))$. 
  \begin{figure}[h]
    \centering
\includegraphics[scale=0.3]{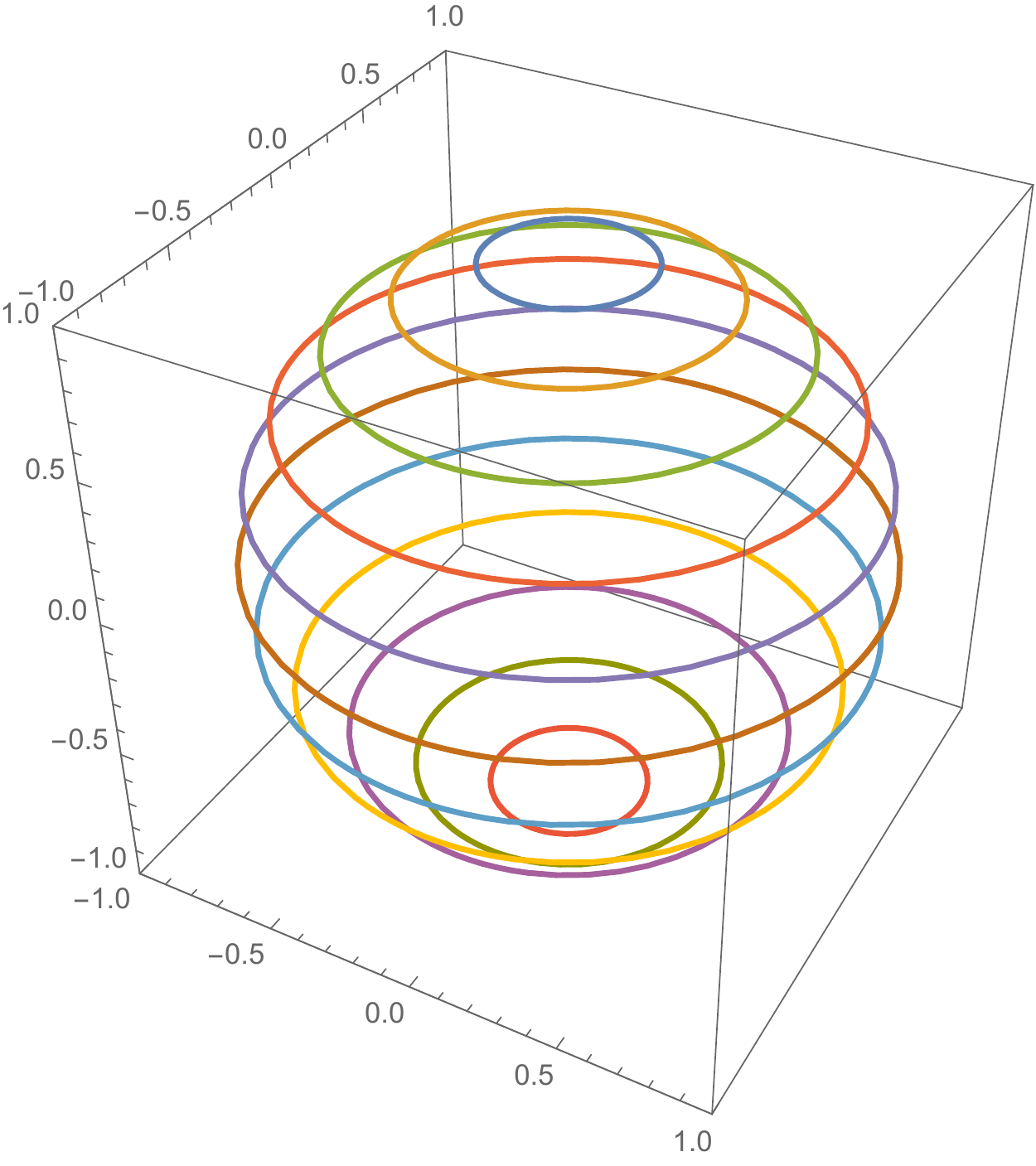}
\includegraphics[scale=0.3]{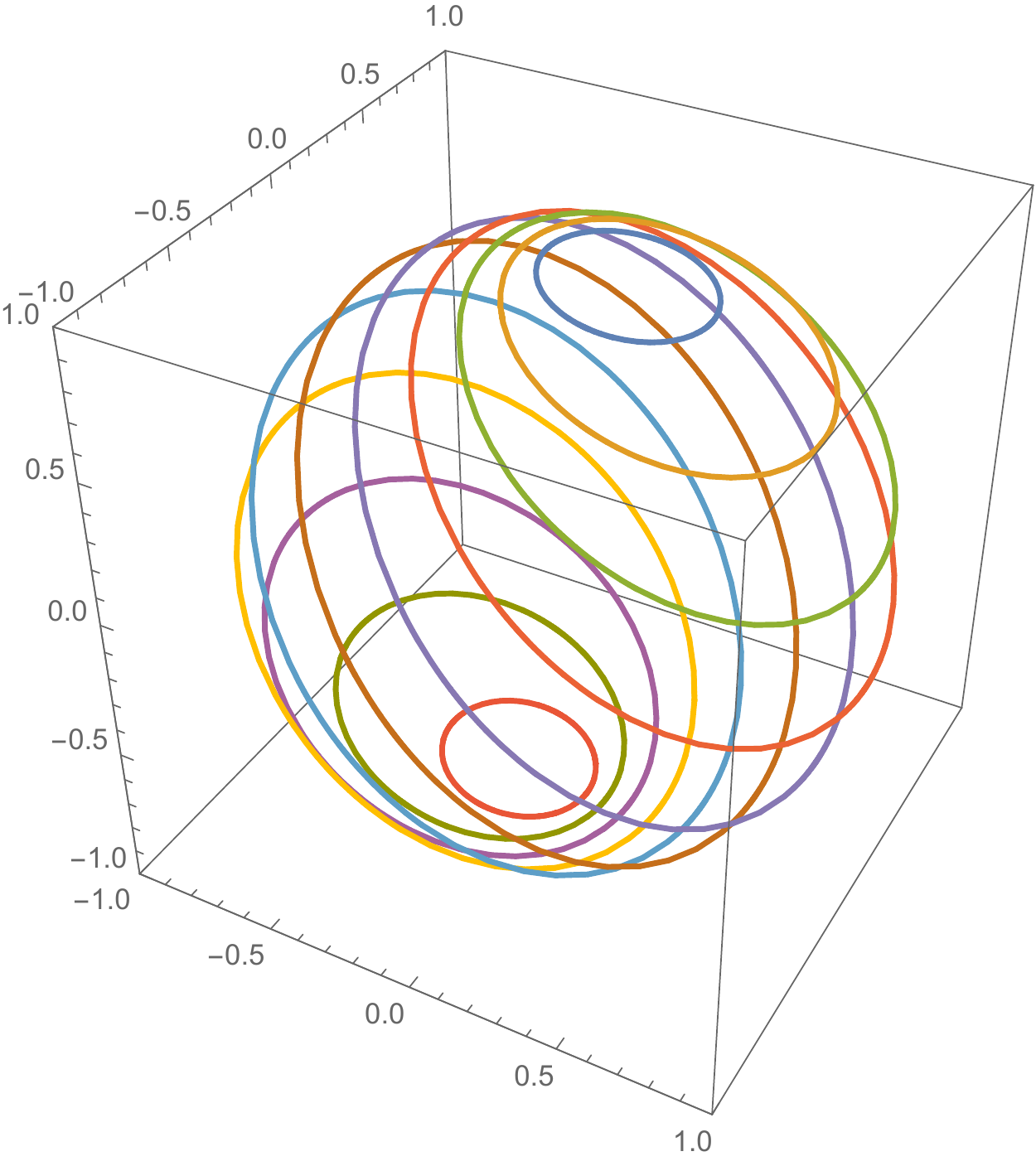}
\includegraphics[scale=0.3]{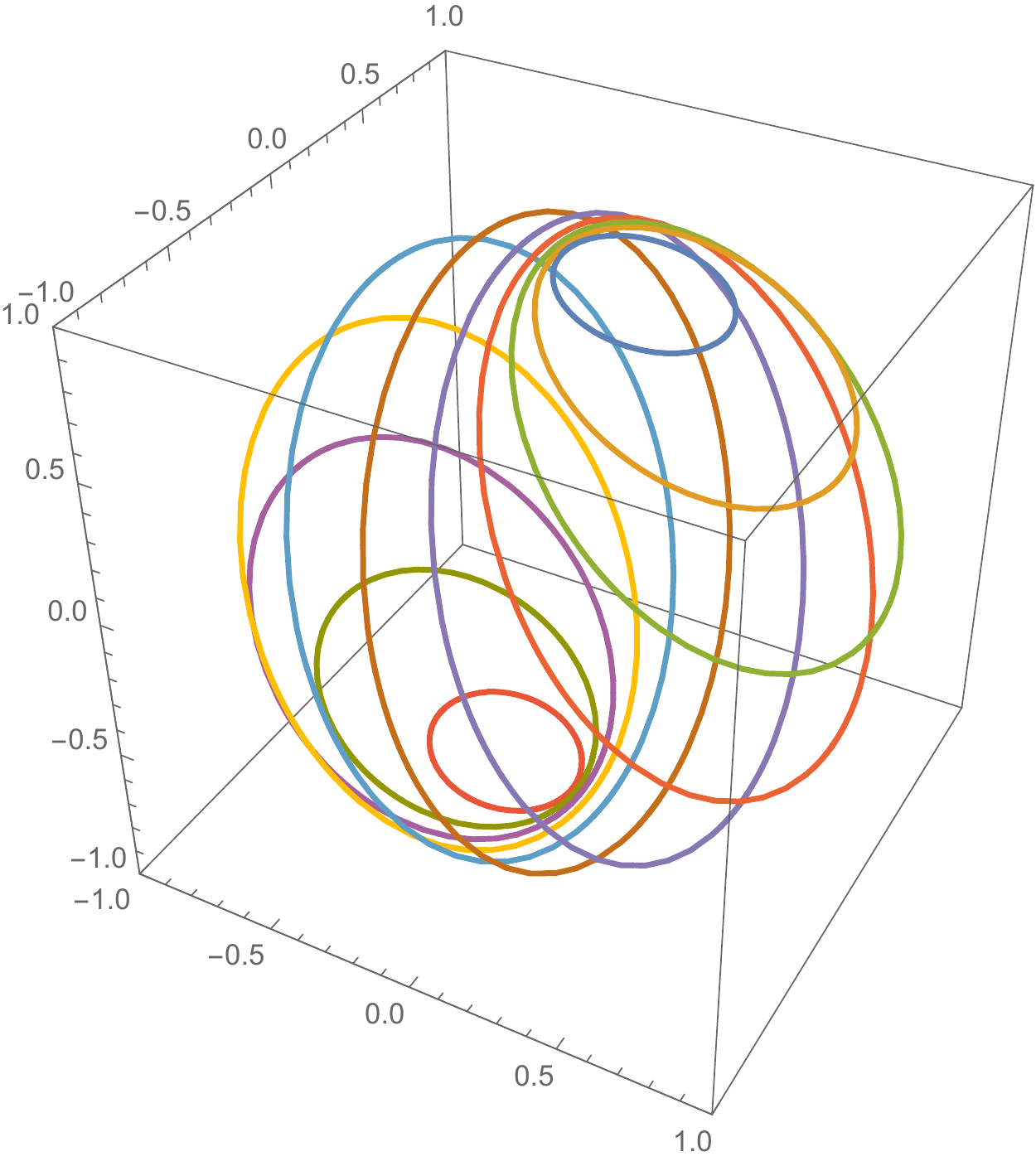}
    \caption{$C_{R}(p)$ shown for various $R$ with $b=0,0.75$ and $0.95$, respectively. }
  \end{figure}
 Since $x(p)=b\sin R$, $y(p)=0$, we conclude from Theorem \ref{thrm:5.7} that $C_{R}(p)$ also satisfies $r_{s}=b\sin\theta$. It is clear that the points $(\sin (f(R)\pm R),0,\cos(f(R)\pm R))$ lie on $C_{R}(p)$. Define
  \begin{equation*}
    g_{1}(R)=f(R)+R\spa g_{2}(R)=f(R)-R,
  \end{equation*}
  Since $f'(R)=b\cos R/\cos f(R)$, we obtain
  \begin{equation*}
    f'(R)^{2}=\frac{b^{2}\cos^{2} R}{1-b^{2}+b^{2}\cos^{2}R}<1,
  \end{equation*}
  so $g_{1}(R)$ is increasing, and similarly, $g_{2}(R)$ is decreasing. Since $g_{1}(\pi)=\pi$ and $g_{2}(\pi)=-\pi$, we conclude that $g_{1}$ is surjective onto $(0,\pi)$ and $g_{2}$ is surjective onto $(-\pi,0)$.

  Now let $\mu\in \gamma$ be the point which maximizes the radial
  function $r|_{\gamma}$. Then $r_{s}(\mu)=0$, so $\sin\theta(\mu)=0$,
  so $\mu=(\sin \alpha,0,\cos\alpha)$ for some $\alpha$. Since $\mu\ne
  (0,0,\pm 1)$, we can take $\alpha\in (-\pi,0)\cup(0,\pi)$. Thus
  there is some $R$ such that $\alpha=g_{1}(R)$ or $\alpha=g_{2}(R)$,
  and thus $\mu\in C_{R}(p)$. Since $C_{R}(p)$ also satisfies
  $r_{s}=b\sin\theta$, we may invoke Theorem \ref{thrm:5.4} to conclude that $\gamma=C_{R}(p)$.
\end{proof}

A similar argument to the $\S{2}$ case works for the $\H{2}$ case (we
found it useful to work in the hyperboloid model $\H{2}\subset
\R{3}$), and we obtain
\begin{thrm}\label{thrm:5.9}
  Let $N=\R{2},k^{-1}\S{2}$ or $k^{-1}\H{2}$. If $\gamma_{0}\subset N$ is a
  $C^{1}$ radial graph satisfying $r_{s}=a\cos\theta+b\sin\theta$, then $\gamma_{0}$ is a circle (which may be translated).
\end{thrm}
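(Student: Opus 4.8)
The plan is to reduce Theorem \ref{thrm:5.9} to the three cases already established, namely $\R{2}$ in Theorem \ref{thrm:5.6} and $\S{2}$ in Theorem \ref{thrm:5.8}, together with the parallel statement for $\H{2}$. First I would note that the rescaling $N = k^{-1}\S{2}\to \S{2}$ (resp.\ $k^{-1}\H{2}\to \H{2}$) is a homothety, and that homotheties send radial graphs to radial graphs and circles to circles; moreover the condition $r_{s}=a\cos\theta+b\sin\theta$ is scale-invariant once one tracks that $r\mapsto kr$ and arc-length rescales accordingly, so it suffices to prove the result for $N=\R{2},\S{2},\H{2}$. The $\R{2}$ and $\S{2}$ cases are exactly Theorems \ref{thrm:5.6} and \ref{thrm:5.8}.

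For the remaining $\H{2}$ case I would run the same argument as in the $\S{2}$ case (Theorem \ref{thrm:5.8}), working in the hyperboloid model $\H{2}\subset\R{3}$ with the Minkowski form, where $\sinh r$ plays the role of $\sin r$. The steps are: (1) establish the hyperbolic analogue of Theorem \ref{thrm:5.7}, i.e.\ compute that a geodesic circle $C_{R}(p)\subset \H{2}$ satisfies $r_{s}$ equal to an explicit linear combination of $\cos\theta$ and $\sin\theta$ with coefficients built from the coordinates of the center $p$ and $\sinh R$, using that the unit tangent at $q\in C_{R}(p)$ is the (Minkowski) cross product of $p$ and $q$ suitably normalized; (2) after a rotation assume $r_{s}=b\sinh\theta$-type normalization — more precisely reduce to $r_{s}=b\sin\theta$ form with $b\in(-1,1)$ exactly as in Theorem \ref{thrm:5.6}, using that $\gamma$ is a radial graph forces $r_{\nu}>0$; (3) solve $b\sinh R=\sinh f(R)$ for a smooth increasing $f$ with $f(R)<R$, check $f'(R)^{2}<1$, and conclude that $g_{1}(R)=f(R)+R$ and $g_{2}(R)=f(R)-R$ are surjective onto the appropriate intervals; (4) take the point $\mu\in\gamma$ maximizing $r$, observe $r_{s}(\mu)=0$ pins down $\theta(\mu)$, locate a circle $C_{R}(p)$ through $\mu$ satisfying the same equation, and invoke Theorem \ref{thrm:5.4} to conclude $\gamma = C_{R}(p)$.

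The only genuinely new computation is step (1) in the $\H{2}$ case; everything else is a transcription with $\sin\leftrightarrow\sinh$. I expect the main (mild) obstacle to be bookkeeping the signs and the Minkowski inner product correctly so that the analogue of the determinant/cross-product identity $z_{s}\sinh R=\det(\cdot)$ comes out with the right sign, and verifying that the radial-graph hypothesis still gives $r_{\nu}>0$ in the hyperboloid chart so that the reduction $|b|<1$ goes through. Since the structure is identical to the $\S{2}$ argument and the paper has already flagged that ``a similar argument to the $\S{2}$ case works,'' I would present the $\H{2}$ case briefly, emphasizing only the modified circle identity, and then assemble the three cases plus the homothety reduction into the statement of Theorem \ref{thrm:5.9}.
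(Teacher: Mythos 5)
Your proposal is correct and follows exactly the route the paper takes: the paper assembles Theorems \ref{thrm:5.6} and \ref{thrm:5.8} with the (unwritten) hyperboloid-model analogue of the $\S{2}$ argument, after the same rescaling reduction from $k^{-1}\S{2}$, $k^{-1}\H{2}$ to the unit space forms. Your outline of the $\H{2}$ case --- the Minkowski cross-product identity replacing Theorem \ref{thrm:5.7}, the substitution $b\sinh R=\sinh f(R)$ with $f'(R)^{2}<1$, and the appeal to Theorem \ref{thrm:5.4} at the maximum of $r$ --- is precisely the transcription the paper intends, and it checks out.
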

\begin{thrm}\label{thrm:5.10}
  Let $N$ be a warped product space, let $\gamma_{0}\subset N$ be a
  piecewise $C^{1}$ Lipshitz radial graph satisfying
  $L[\gamma_{0}]=F(A[\gamma_{0}])$, and suppose
  $0\le \varphi'\varphi'-\varphi\varphi''\le 1$. Then, considering $\gamma_{0}$
  as lying in a subset of one of the spaceforms, $\gamma_{0}$ is a
  circle.
\end{thrm}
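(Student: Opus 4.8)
The plan is to dichotomize on whether $\varphi'\varphi'-\varphi\varphi''\equiv 1$ along $\gamma_{0}$, and in the degenerate case to transplant the problem to a space form where Theorems \ref{thrm:5.6}--\ref{thrm:5.9} apply. Since $\gamma_{0}$ is a radial graph, $r|_{\gamma_{0}}$ has image a closed interval $[r_{1},r_{2}]\subset(0,R)$. If $\varphi'\varphi'-\varphi\varphi''\not\equiv 1$ on $[r_{1},r_{2}]$, then Theorem \ref{thrm:5.2} applies directly and $\gamma_{0}$ is a slice $\set{r}\cross\S{1}$, which is a circle. So assume $\varphi'\varphi'-\varphi\varphi''\equiv 1$ on $[r_{1},r_{2}]$ and (discarding the trivial case) $r_{1}<r_{2}$. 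By the proof of Lemma \ref{lem:5.3}, $\varphi|_{[r_{1},r_{2}]}$ is one of $\sinh(k(r-r_{0}))$, $\sin(k(r-r_{0}))$, $r-r_{0}$, and the explicit formulas there embed $[r_{1},r_{2}]\cross\S{1}$ isometrically into $k^{-1}\H{2}$, $k^{-1}\S{2}$ or $\R{2}$; after rescaling I regard $\gamma_{0}$ as a radial graph in $\H{2}$, $\S{2}$ or $\R{2}$.

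Now run the ``cut and reflect'' construction of Theorems \ref{thrm:4.3} and \ref{thrm:5.2}: there is $\alpha\in\S{1}$ together with bilaterally symmetric (axis $\alpha$) radial graphs $\gamma_{1},\gamma_{2}$, contained in $[r_{1},r_{2}]\cross\S{1}$, with $A[\gamma_{j}]=A[\gamma_{0}]$, with $L[\gamma_{1}]+L[\gamma_{2}]=2L[\gamma_{0}]$, and with $\rho_{\gamma_{1}}$ agreeing with $\rho_{0}$ on the arc $U=\set{\text{imag}(\alpha\theta^{-1})\ge 0}$ and $\rho_{\gamma_{2}}$ agreeing with $\rho_{0}$ on the complementary arc. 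Relabeling so that $L[\gamma_{1}]\le L[\gamma_{0}]\le L[\gamma_{2}]$, the isoperimetric inequality (Theorem \ref{thrm:4.3}) gives $L[\gamma_{1}]\ge F(A[\gamma_{1}])=F(A[\gamma_{0}])=L[\gamma_{0}]$, so $L[\gamma_{1}]=L[\gamma_{0}]$ and then $L[\gamma_{2}]=L[\gamma_{0}]=F(A[\gamma_{2}])$ as well. For $j=1,2$, Theorem \ref{thrm:5.1} yields a bilaterally symmetric flow $\gamma_{j}(t)$ with $\rho_{\gamma_{j}(t)}\to\rho_{\gamma_{j}}$ uniformly and length and area continuous as $t\searrow 0$. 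Area is constant along the flow, $L$ is nonincreasing for bilaterally symmetric curves (Theorem 4.1, applied from any positive time, since $\gamma_{j}(t)$ is then smooth), and $L[\gamma_{j}(t)]\ge F(A[\gamma_{j}(t)])=F(A[\gamma_{j}])=L[\gamma_{j}]$ by Theorem \ref{thrm:4.3}; hence $L[\gamma_{j}(t)]\equiv L[\gamma_{j}]$. The equality case of Corollary \ref{cor:2.4} applies ($\int r_{s}\,d\theta=0$ by symmetry, and $\varphi'\varphi'-\varphi\varphi''\equiv 1$ since the estimate $\min\rho_{\gamma_{j}}\le\rho_{\gamma_{j}(t)}\le\max\rho_{\gamma_{j}}$ keeps $\gamma_{j}(t)$ in $[r_{1},r_{2}]\cross\S{1}$), so $r_{s}=a_{j}(t)\cos\theta+b_{j}(t)\sin\theta$ on $\gamma_{j}(t)$ for every $t>0$, and Theorem \ref{thrm:5.9} shows that each $\gamma_{j}(t)$ is a circle (possibly translated).

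It remains to pass to $t\searrow 0$ and reassemble $\gamma_{0}$. A circle that is a radial graph encloses the base point, so the enclosed area equals the area of its geodesic disk and thus determines the radius; constancy of area forces the radii $R_{j}(t)\equiv R_{j}$, after which uniform convergence of $\rho_{\gamma_{j}(t)}$ (together with the lower bound $\rho_{\gamma_{j}(t)}\ge r_{1}$) forces the centers to converge, so $\gamma_{j}=\lim_{t\to 0}\gamma_{j}(t)=:C_{j}$ is a circle and $\gamma_{j}(t)\to C_{j}$ smoothly. Let $A,B$ be the points of $\gamma_{0}$ at $\theta=\alpha$ and $\theta=-\alpha$; each $C_{j}$ passes through $A$ and $B$ since $\rho_{\gamma_{j}}=\rho_{0}$ there, and $A,B$ lie on the fixed-point geodesic $\ell$ of $\mathfrak{R}_{\alpha}$. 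Bilateral symmetry forces $r_{s}=0$ at $\theta=\pm\alpha$ on $\gamma_{j}(t)$, hence on $C_{j}$ in the limit; so the $C_{j}$-radius through $A$ (and through $B$) is tangent to $\bd_{r}$, i.e.\ lies along $\ell$, so the center of $C_{j}$ is on $\ell$. Thus $C_{1}$ and $C_{2}$ are both radial-graph circles centered on $\ell$ and passing through the two points $A,B\in\ell$; a short check in $\R{2},\S{2},\H{2}$ shows there is exactly one such circle (in $\R{2}$ and $\H{2}$ the center is the unique point of $\ell$ equidistant from $A$ and $B$; in $\S{2}$, of the two equidistant points on the great circle $\ell$ only one yields a circle enclosing the base point). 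Hence $C_{1}=C_{2}=:C$, and since $\rho_{\gamma_{0}}$ equals $\rho_{C_{1}}$ on $U$ and $\rho_{C_{2}}$ on the complement, $\rho_{\gamma_{0}}=\rho_{C}$; that is, $\gamma_{0}=C$ is a circle.

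I expect the third paragraph to hold the only real difficulties: showing that the merely-Lipschitz flow limits $\gamma_{j}$ really are honest circles (the area--radius correspondence plus convergence of centers), and identifying the unique reflection-symmetric radial-graph circle through $A$ and $B$ — in the $\S{2}$ case one must rule out the ``large'' circle through $A,B$ that does not enclose the base point. Everything up to ``each $\gamma_{j}(t)$ is a circle'' is routine once one has Corollary \ref{cor:2.4}, Theorem 4.1, and Theorems \ref{thrm:4.3}, \ref{thrm:5.1}, \ref{thrm:5.9}.
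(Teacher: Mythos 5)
Your proposal is correct and follows essentially the same route as the paper: reduce to the $\varphi'\varphi'-\varphi\varphi''\equiv 1$ case via Theorem \ref{thrm:5.2}, pass to a space form, cut and reflect into bilaterally symmetric halves, flow each half with constant length so that Corollary \ref{cor:2.4} and Theorem \ref{thrm:5.9} force each $\gamma_{j}(t)$ to be a circle, and then take $t\searrow 0$. The only difference is that you make explicit two points the paper asserts without comment --- that area preservation pins down the radius $R_{j}(t)$, and that the reflection-symmetric radial-graph circle through the two axis points $A,B$ is unique, which is how $C_{1}=C_{2}$ and hence $\gamma_{0}=C$ is obtained.
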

\begin{proof}
  We have already proved the case where
  $\varphi'\varphi'-\varphi\varphi''\not\equiv 1$ in Theorem \ref{thrm:5.2}, so we assume that
  $\varphi'\varphi'-\varphi\varphi''\equiv 1$. Using the same ``cut and reflect'' technique used in Theorem \ref{thrm:4.3} and
  Theorem \ref{thrm:5.2}, we may cut $\gamma_{0}$ (along an axis
  $\alpha\in \S{1}$) into two bilaterially symmetric halves
  $\gamma_{1}$ and $\gamma_{2}$. Following Theorem \ref{thrm:5.2}, the solution
  $\gamma_{1}(t)$ (guaranteed by Theorem \ref{thrm:5.1}) has constant length, and
  thus (by Corollary \ref{cor:2.4}) $r_{s}(t)=a(t)\cos\theta+b(t)\sin\theta$
  holds along the flow, and so (by Theorem \ref{thrm:5.9}) $\gamma_{1}(t)$ is a
  circle for all positive $t$. Let $r_{1}$ be the (constant) radius of
  $\gamma_{1}(t)$, let $C_{0}$ be the (translated) circle of radius
  $r_{1}$ fixed under the reflection through $\alpha$ which contains
  the points $(\rho_{0}(\pm \alpha),\pm\alpha)$. The centre of $C_{0}$
  is uniquely determined by its axis of symmetry $\alpha\in \S{1}$,
  its radius $r_{1}$, and the points
  $(\rho_{0}(\pm \alpha),\pm\alpha)$. The centre of $\gamma_{1}(t)$ is
  also determined by its axis of symmetry, its radius, and the point
  $(\rho_{1}(\pm\alpha,t),\alpha)$, and by convergence
  $\rho_{1}(\pm\alpha,t)\to \rho_{0}(\pm\alpha)$, we deduce that
  $\gamma_{1}(t)$ converges to $C_{0}$ uniformly as $t\searrow 0$. We
  deduce that $\gamma_{1}=C_{0}$, and similarly, $\gamma_{2}=C_{0}$,
  and thus by the construction of $\gamma_{1}$ and $\gamma_{2}$, we
  conclude $\gamma_{0}=C_{0}$, as desired.
\end{proof}
Combining Theorems \ref{thrm:4.3}, \ref{thrm:5.2} and \ref{thrm:5.10}, we conclude the isoperimetric
inequality Theorem 1.2 stated in the introduction.
\section{Conclusion}
It seems to be a general phenomenon that many sophisticated tools used
in higher dimensions $n>1$ (such as the Minkowski identity
\eqref{eq:1.3}) cannot be used when $n=1$. The symmetry argument used
in Theorem 4.1 replaced the use of the Minkowski identity, but
necessitated a more complicated argument to deal with the
non-symmetric case. 

As mentioned in section 2.1, the restriction
$\varphi'\varphi'-\varphi\varphi''\le 1$ is sharp - without it the
isoperimetric inequality is guaranteed to fail. However, the
restriction $\varphi'\varphi'-\varphi\varphi''>0$ is not sharp (we
used this inequality to prove the convergence to a circle). For
example, the cylinder $\S{1}\cross [0,1]$ satisfies
$\varphi'\varphi'-\varphi\varphi''\equiv 0$, and it is easy to explicitly
show that it does satisfy the isoperimetric inequality.

\proof[Acknowledgements]
Many thanks to my supervisor, Pengfei Guan, for providing guidance,
insight, and many office hours to answer my questions. I would also
like to thank the McGill Mathematics department for their support. The
author was supported by NSERC undergraduate student research award.

\bibliographystyle{alpha}
\bibliography{citations}

\begin{thebibliography}{LSU68}

\bibitem[Gag83]{gage1}
Michael~E. Gage.
\newblock An isoperimetric inequality with applications to curve shortening.
\newblock {\em Duke Mathematical Journal}, 50, No. 4\,:1225--1229, 1983.

\bibitem[GH86]{gageham}
Michael~E. Gage and Richard~S. Hamilton.
\newblock The heat equation shrinking convex plane curves.
\newblock {\em J. Differential Geometry}, 26:69--96, 1986.

\bibitem[GL15]{MCFSF}
Pengfei Guan and Junfang Li.
\newblock A mean curvature flow in space form.
\newblock {\em International Mathematics Research Notices}, 2015, No.
  13\,:4716--4740, 2015.

\bibitem[GLW]{MCFWP}
Pengfei Guan, Junfang Li, and Mu-Tao Wang.
\newblock A normalized flow in warped product spaces.
\newblock Soon to be submitted for publication.

\bibitem[Lie96]{Lieberman}
Gary~M. Lieberman.
\newblock {\em Second Order Parabolic Differential Equations}.
\newblock World Scientific Publishing, first edition, 1996.

\bibitem[LSU68]{pdetext}
O.A. Ladyzenskaya, V.A. Solonnikov, and N.~N. Ural’ceva.
\newblock {\em Linear and Quasi-Linear Equations of Parabolic type}.
\newblock American Mathematical Society, 1968.

\end{thebibliography}

\end{document}